\newtheorem{theorem}{Theorem}
\newtheorem{proposition}[theorem]{Proposition}
\newtheorem{Lem}[theorem]{Lemma}
\newtheoremstyle{named}{}{}{\itshape}{}{\bfseries}{.}{.5em}{\thmnote{#3 }#1} \theoremstyle{named} \newtheorem*{namedtheorem}{Theorem}
\theoremstyle{definition}
\newtheorem{definition}{Definition}
\newtheorem{remark}{Remark}
\newcommand{\tf}{\Rightarrow}
\newcommand{\leb}{\lambda}
\newcommand{\W}{\Omega}
\newcommand{\R}{{\mathbb R}}
\newcommand{\E}[1]{{\mathbb E}\left [#1\right]}
\newcommand{\ep}{\varepsilon}
\newcommand{\gives}{\ensuremath{\rightarrow}}
\newcommand{\x}{\ensuremath{\times}}
\newcommand{\abs}[1]{\ensuremath{\left| #1 \right|}}
\newcommand{\lr}[1]{\ensuremath{\left(#1 \right)}}
\newcommand{\norm}[1]{\left\lVert#1\right\rVert}
\newcommand{\inprod}[2]{\ensuremath{\left\langle#1,#2\right\rangle}}
\newcommand{\Union}{\ensuremath{\bigcup}}
\newcommand{\w}{\omega}
\newcommand{\dell}{\ensuremath{\partial}}
\newcommand{\set}[1]{\ensuremath{\{#1\}}}
\renewcommand{\a}{0}
\def\XXint#1#2#3{{\setbox0=\hbox{$#1{#2#3}{\int}$} \vcenter{\hbox{$#2#3$}}\kern-.5\wd0}}
\DeclareMathOperator{\vol}{vol}
\DeclareMathOperator{\Var}{Var}
\DeclareMathOperator{\diam}{diam}
\DeclareMathOperator{\inj}{inj}
\DeclareMathOperator{\Cov}{Cov}
\DeclareMathOperator{\Den}{Den}
\DeclareMathOperator{\MRW}{RW}
\DeclareMathOperator{\Crit}{Crit}
\DeclareMathOperator{\C}{{Crit}}
\title[Universality for Zeros and Crits of Random Waves]{Local Universality for Zeros and Critical Points of Monochromatic Random Waves}
\author[Y. Canzani]{Yaiza Canzani}
\author[B. Hanin]{Boris Hanin}
\address[Y. Canzani]{Department of Mathematics, University of North Carolina, Chapel Hill, United States.\medskip}
 \email{canzani@email.unc.edu}
\address[B. Hanin]{Department of Mathematics, Texas A\&M ~~and~~ Facebook AI Research, NYC\medskip}
\email{bhanin@math.tamu.edu}
\begin{document}
\maketitle
\begin{abstract}
This paper concerns the asymptotic behavior of zeros and critical points for monochromatic random waves $\phi_\leb$ of frequency $\leb$ on a compact, smooth, Riemannian manifold $(M,g)$ as $\leb\gives \infty.$ We prove global variance estimates for the measures of integration over the zeros and critical points of $\phi_\leb.$ These global estimates hold for a wide class of manifolds - for example when $(M,g)$ has no conjugate points - and rely on new local variance estimates on zeros and critical points of $\phi_\leb$ in balls of radius $\approx \leb^{-1}$ around a fixed point. Our local results hold under conditions about the structure of geodesics that are generic in the space of all metrics on $M.$ 
\end{abstract}


\section{Introduction}
This article gives new local and global results about the measures of integration over the zero and critical point sets of a monochromatic random wave $\phi_\leb$ of frequency $\leb$. To introduce our results, let $(M,g)$ be a compact, smooth, Riemannian manifold without boundary of dimension $n\geq 2,$ and write $\Delta_g$ for the positive definite Laplace-Beltrami operator. Consider an orthonormal basis $\set{\varphi_j}_{j=1}^\infty$ of $L^2(M,g)$ consisting of real-valued eigenfunctions
  $\Delta_g \varphi_j = \leb_j^2 \varphi_j$ {with} $0=\leb_0<\leb_1\leq \leb_2\leq \cdots \nearrow \infty$,  normalized so that $\norm{\varphi_j}_{2}=1$.
\emph{Monochromatic random waves} of frequency $\lambda$ are Gaussian fields on $M$ defined by
\begin{equation}
\phi_\leb:=\tfrac{1}{\sqrt{\dim(H_{\leb})}}\sum_{\leb_j\in [\leb,  \leb+1]} a_j \varphi_j,\label{E:RWdef}
\end{equation}
where the coefficients $a_j\sim N(0,1)$ are real valued  i.i.d standard Gaussians and 
\[H_{\leb}:=~\bigoplus_{\leb_j\in [\leb, \leb+1]}\ker(\Delta_g-\leb_j^2).\]
We write \[\phi_\leb\in \MRW_\leb(M,g)\] for short. The ensembles $\phi_\leb$ are Gaussian models for eigenfunctions of the Laplacian with eigenvalue approximately equal to $\leb^2$ on a compact Riemannian manifold $(M,g)$. In the setting of a general smooth manifold the ensembles $\MRW_\leb$ were first defined by Zelditch in \cite{Zel2}. Zelditch was inspired in part by the influential work of Berry \cite{Berry}, which proposes that random waves on Euclidean space and flat tori are good semiclassical models for high frequency wavefunctions in quantum systems whose classical dynamics are chaotic. 

{
 Specifically, Berry proposed his Random Wave Conjecture: at a fixed $x\in M$ the local behavior of \textit{deterministic} eigenfunctions ${\varphi_j}(x+u/\leb)$ should be well-approximated, as $\leb\gives \infty,$ by the behavior of frequency $1$  \textit{random} waves on $T_xM\cong \R^n$, so long as geodesics on $(M,g)$ are chaotic. The idea is that the rescaled eigenfunctions ${\varphi_j}\lr{x+u/\lambda}$ approximately solve $\Delta f = f$ for the ``frozen'' constant coefficient Laplacian on $T_xM\cong \R^n$ and hence are almost frequency $1$ eigenfunctions of $\Delta$ on $\R^n$ (see \eqref{e:ev1}). Moreover,  since a classical free particle on $(M,g)$ moves along geodesics, in the long time (i.e. high energy) limit, it will become equidistributed on $M$ with respect to the volume form of $g$ {if} the geodesics on $(M,g)$ are chaotic. Hence, its quantum analogs, the eigenfunctions ${\varphi_j}$ as $\leb_j\gives \infty$, should also become delocalized and have no preferred points or directions on $M,$ suggesting they are  locally well approximated by Euclidean random waves. 
 
Berry's Random Wave Conjecture has remained out of reach for deterministic sequences of eigenfunctions. However, the authors' previous work \cite{CH, CH-derivatives} can be viewed as a weak form of the conjecture: the statement that, under some assumptions on the structure of geodesics on $(M,g),$ rescaled \textit{random} waves $\phi_\lambda\lr{x+u/\lambda}$ on $(M,g)$ have Berry's random waves on $\R^n$ as their local limits in the sense of $C^\infty$ convergence of covariance functions of Gaussian fields (see Definition \ref{D:IS} and Remark \ref{R:IS}). This statement is non-trivial since it implies that the global geometry and topology of $(M,g)$ does not affect the local behavior of high frequency random waves. 

The purpose of the present work is to derive  several concrete results about the nodal sets and critical points of random waves  from the kernel convergence in \cite{CH, CH-derivatives}. This point of view has been taken up in a number of articles (c.f. e.g. \cite{Nourdin} and {Section \ref{S:Prior}}). } Define the measures of integration over $\{\phi_\leb=0\}$ and $\{d\phi_\leb=0\} $ by
\[
Z_\leb(\psi):= \int_{\phi_\leb^{-1}(0)}\psi(x) d\mathcal
H^{n-1}(x)\qquad \text{and} \qquad \C_\leb(\psi):=\sum_{d\phi_\leb(x)=0}\psi(x),
\]
where $\psi:M\gives\R$ and $\mathcal H^{n-1}$ is the $(n-1)$-dimensional Hausdorff measure. Our main result gives asymptotics for the expected value and estimates for the variance of the linear statistics of $Z_\leb, \text{Crit}_\leb$ that are valid for generic Riemannian metrics on $M$. For our estimates about the means of $Z_\leb(\psi),\C_\leb(\psi)$ we ask that $(M,g)$ be a manifold of isotropic scaling (see Definition \ref{D:IS} and  Section \ref{S:Spectral Conditions}). In particular, it is true for any manifold with negative curvature, or with no conjugate points. For our more delicate variance estimates to hold, we ask in addition that the restrictions of $\phi_\leb$ to small balls centered at different points become asymptotically uncorrelated. Namely, we say that the random waves $\phi_\leb$ have {\em short-range correlations} if for each $\ep>0$ and every $\alpha, \beta \in \mathbb N$
\begin{equation}\label{D:SRC}
\sup_{\{x,y: \, d_g(x,y) \geq \leb^{-1+\ep}\}}
\abs{\nabla_x^\alpha\nabla_y^\beta
  \Pi_{\leb}(x,y)}=o_{\ep}(\leb^{\alpha + \beta}),
  \end{equation}
as $\leb \to \infty$, where $\nabla_x,\nabla_y$ are covariant derivatives, and $\Pi_\leb(x,y):=\Cov\lr{\phi_\leb(x),\phi_\leb(y)}$ is the two point correlation function for $\phi_\leb$, which is the kernel for the orthogonal projection onto $H_\leb.$ This condition is again generic in the space of Riemannian metrics on $(M,g)$ and is satisfied for example if for any pair of points $x,y\in M$ the measure of geodesic arcs joining them is zero. This is known to happen on manifolds of negative curvature, or more generally, with no conjugate points. We refer the reader to Section \ref{S:Spectral Conditions} for a discussion of when $(M,g)$ is a manifold with isotropic scaling satisfying \eqref{D:SRC}.

We are ready to sate our main theorem. It is the first result that gives variance estimates for  $Z_\leb, \text{Crit}_\leb$ that holds on a large class of smooth Riemannian manifolds (we refer the reader to Section \ref{S:Prior} for a discussion of previous work). In what follows $dv_g$ is the Riemannian volume form.

\begin{theorem}\label{T:Global}
Let $(M,g)$ be a smooth, compact, Riemannian manifold of dimension
$n\geq 2$ with no boundary. Let $ \phi_\leb \in \MRW_\leb (M,g)$ and
suppose that $M$ is a manifold of isotropic scaling (Definition \ref{D:IS}). Then, for any bounded measurable function $\psi:M\gives \R$, 
\begin{equation}\label{E: expected size}
\lim_{\leb\gives \infty}\mathbb E \left[ \leb^{-1} Z_\leb(\psi)
\right] = \frac{1}{\sqrt{\pi n}} \frac{\Gamma \left(\frac{n+1}{2}
  \right)}{\Gamma \left(\frac{n}{2} \right)} \int_M \psi(x)
dv_g(x),
\end{equation}
and  
\begin{equation}\label{E: expected critical}
\lim_{\leb \gives \infty}\mathbb E \left[
  \leb^{-n}\Crit_\leb(\psi) \right]
= C_n\int_M \psi(x) dv_g(x),
\end{equation}
where $C_n$ is a positive constant that   depends only on $n$. Suppose further that $\phi_\leb$ has short-range correlations in the sense of \eqref{D:SRC}. Then,
\begin{equation}\label{E:var}
\Var \left[\leb^{-1} Z_\leb(\psi)\right] = O(\leb^{-\frac{n-1}{2}})
\end{equation}
and
\begin{equation}\label{E:var crits}
 \Var \left[ \leb^{-n}\Crit_\leb(\psi)\right]= O\lr{\leb^{-\frac{n-1}{2}}},
\end{equation}
as $\leb\gives \infty.$
\end{theorem}

\begin{remark}\label{R:jets}
The test function $\psi$ 
can be replaced by a function 
  \[\psi(x)=\psi(x, \phi_\leb(x), D^2\phi_\leb(x),\ldots)\] 
depending on the jets of $\phi_\leb$ provided $\psi:\R^n\x C^0(\R^n, \R^k)\gives \R$ is bounded and continuous when $C^0(\R^n,\R^k)$ is equipped with the topology of uniform convergence on compact sets.
Hence, for  example, we could study the distribution of critical values by taking
\[\psi(u, \phi_\leb) = \mathbf 1_{\{\phi_\leb^x\geq \alpha\}}(u),\qquad \alpha  \in \R.\]

\end{remark}
\begin{remark}
The proof of Theorem \ref{T:Global} actually shows that \eqref{E:
expected size} holds as soon as almost every point is a point of isotropic scaling. That is, it holds provided $\vol_g(M\backslash \mathcal{IS}(M,g))=0$ (see Definition \ref{D:IS}). 
  Also, by the Borel-Cantelli Lemma, if $n\geq 4$ and $\phi_j$ are independent frequency
  $j\in \mathbb N$ random waves on $(M,g)$, then \eqref{E:var} shows that the total
  nodal set measure $j^{-1}Z_{j}(\psi)-\E{j^{-1}Z_{j}(\psi)}$ converges
  almost surely to $0.$  Finally, when $n=2$ we have $C_2 = \E{\C_{\infty,1}}=\frac{1}{4\pi\sqrt{6}}$ where $C_2$ is the dimensional constant in \eqref{E: expected critical}.
\end{remark}

Theorem \ref{T:Global} hinges on a careful study of the statistics of $\phi_\leb$ when restricted to ``wavelength balls'' of radius $\approx \leb^{-1}$ around a fixed point $x\in M$ of isotropic scaling. The results that describe the behavior of $Z_\leb$ or $\text{Crit}_\leb$ restricted to these shrinking balls are given in Sections \ref{S:localzeros} and \ref{S:localcrits}, respectively. Before these two sections we  give an overview of the prior results in Section \ref{S:Prior} and give a formal definition of points of isotropic scaling in Section \ref{S:isotropic}.


\subsection{Prior results}\label{S:Prior}
To the best of our knowledge, Theorem \ref{T:Global} is the first result with a non-trivial variance estimate for the Hausdorff measure of the nodal set of random waves for a generic smooth Riemannian manifold (for real analytic $(M,g)$ a weaker estimate was given in \cite[Cor. 2]{Zel2}). A version of \eqref{E: expected size} was also stated, with a heuristic proof, in \cite[Prop. 2.3]{Zel2} for both Zoll and aperiodic manifolds.

Previous results on the Hausdorff measure of nodal sets focus primarily on exactly solvable examples, where more precise variance estimates are available. In these settings, due to the degeneracy of the spectrum of the Laplacian, one replaces random waves by random exact eigenfunctions. On round spheres, for instance, B\'erard \cite{Be2} proved \eqref{E: expected size} (example (1) on p.3). Later, in the same setting, Neuheisel \cite{Neu} and Wigman \cite{Wig2} obtained upper bounds for the variance that are of polynomial order in $\leb$. Further, on $S^2$, Wigman \cite{Wig} found that the variance actually grows like $\leb^{-2}\log \leb$ as $\leb \to \infty$, much better than the general $O(\leb^{-1/2})$ estimate in \eqref{E:var}. On flat tori $\mathbb T^n$ (for exact eigenfunctions) Rudnick and Wigman \cite{RW} computed the expected value of the total Hausdorff measure of the zero set and gave an upper bound of the form $\leb^2(\dim(H_{\leb}))^{-1/2}$ on its variance. Subsequently, on $\mathbb T^2$, Krishnapur, Kurlberg and Wigman \cite{KKW} found that the variance is asymptotic to a constant, while Marinucci, Pecatti, Rossi and Wigman proved that the size of the zero set converges to a limiting distribution that is not Gaussian and depends on the angular distribution of lattice points on circles \cite{MPRW}.

The behavior of the number of critical points has been studied in detail on $S^2$. Nicolaescu \cite{Nic} studied the expected value of the number of critical points, obtaining \eqref{E: expected critical}. The variance was studied by Cammarota, Marinucci and Wigman \cite{CMW}. They obtain a polynomial upper bound. This upper bound was later improved by Cammarota and Wigman \cite{CW} who proved that the variance grows like $\leb^2 \log \leb$ (as opposed to our $\leb^{7/2}$ estimate) as $\leb \to \infty$. Finally, for {a} smooth domain in $\R^2,$ Nourdin-Peccati-Rossi \cite{Nourdin} prove that both for real and complex random waves, the Hausdorff measure of the nodal set is asymptotically normal in the high frequency limit. 


\subsection{Isotropic scaling}\label{S:isotropic}

Our main result, Theorem \ref{T:Global}, hinges on understanding the statistics, as $\leb\gives \infty$, of $\phi_\leb$
restricted to ``wavelength balls'' of radius $\approx \leb^{-1}$ around
a fixed point $x\in M.$ After rescaling by $1/\leb$, the function $\phi_\leb$ has frequency approximately $1$ on such balls in the sense that it also the approximate local eigenvalue equation
\begin{equation}\label{e:ev1}
    \Delta_{_{\!T_xM}} \phi_\leb(x+\tfrac{u}{\leb})\approx \phi_\leb(x+ \tfrac{u}{\leb}),
    \end{equation}
where $\Delta_{_{\!T_xM}}$ denotes the \textit{flat} Laplacian on the tangent space at {$T_xM$}. Moreover, as $\leb\gives \infty,$ for a generic Riemannian metric on $M,$ the covariance kernels $\Pi_\leb$ of $\phi_\leb$ converge in the $C^\infty$ topology to those of a limiting ensemble of frequency $1$ functions $\phi_\infty$, called frequency $1$ random waves on $\R^n\cong T_xM$, where $n$ is the dimension of $M$. We explain this in what follows. 

It is natural to study $\phi_\leb$ by fixing $x\in M$ and considering the rescaled pullback of $\phi_\leb$ to the tangent space $T_{x}M.$ We denote this pullback by
\begin{equation}\label{E: rescaled}
\phi_\leb^{x}(u):=\phi_\leb\lr{\exp_{x}\lr{\frac{u}{\leb}}}.
\end{equation}
The law of $\phi_\leb\in \MRW_\leb(M,g),$ which is a centered smooth Gaussian field, is determined by its covariance kernel
\[\Pi_\leb(x,y):=\Cov\lr{\phi_\leb(x),\phi_\leb(y)}=\frac{1}{\dim H_{\leb}}\sum_{\leb_j\in [\leb, \leb+1]}\varphi_j(x)\varphi_j(y),\]
{$x,y \in M$.}
The function $\Pi_{\leb}(x,y)$ is the Schwartz kernel for the spectral (orthogonal) projector 
$\Pi_{[\leb, \leb+1]}:L^2(M,g)\gives H_{\leb},$
normalized to have unit trace. The dilated functions $\phi_\leb^x$ are centered Gaussian fields on $T_xM,$ and we denote their scaled covariance kernel by
\[ \Pi_{\leb}^{x}(u,v):=\Cov(\phi_\leb^{x}(u) ,\phi_\leb^{x}(v))=\Pi_{\leb}\lr{\exp_{x}\lr{\frac{u}{\leb}},~\exp_{x}\lr{\frac{v}{\leb}}},\]
{$u,v \in T_xM$.}
When $x$ is a point of isotropic scaling (see Definition \ref{D:IS}
below), the kernels $\Pi_\leb^x$ converge in the $C^\infty$
sense to the covariance kernel of a limiting ensemble of random
functions 
\begin{equation}\label{e:Berry}
\phi_\infty^x \in \MRW_1(T_{x}M, g_{x}),
\end{equation}
 called \textit{frequency $1$ random waves on $\R^n\cong T_xM$}. Here
 $g_x$ denotes the constant coefficient metric obtained by
 ``freezing'' $g$ at $x$. The random wave $\phi_\infty^x$ is defined as the unique centered Gaussian field with covariance
kernel
\begin{equation}\label{E:LimitDef}
\Pi_\infty^{x}(u,v)=\lr{2\pi}^{\frac{n}{2}}\frac{J_{\tfrac{n-2}{2}}\big(\abs{u-v}_{g_{x}}\big)} {\abs{u-v}_{g_{x}}^{\tfrac{n-2}{2}}}=\int_{S_{x}M} e^{i\inprod{u-v} {\w}_{g_{x}}}d\w.
\end{equation}
Here $J_\nu$ denotes a Bessel function of the first kind with index
$\nu,$ $S_{x}M$ is the unit sphere in $T_xM$ with respect to $g_{x},$ and $d\w$
is the hypersurface measure. The formal definition is the following. 

\begin{definition}\label{D:IS}
A point $x\in M$ is a \emph{point of isotropic scaling}, denoted $x
\in \mathcal {IS}(M,g),$ if for every non-negative function $r_\leb$ satisfying $ r_\leb=o(\leb)$ as $\leb\gives \infty$ and all $\alpha, \beta \in  \mathbb N^n$, we have
\begin{equation}\label{E:CovConv1}
\sup_{u, v \in B_{r_\leb}}  \left | \partial_u^\alpha \partial_v^\beta \,\left[\Pi_{\leb}^{x}(u,v)- \Pi_\infty^{x}(u,v) \right]\big. \right|=o_{\alpha,\beta}(1)
\end{equation}
as $\leb\gives \infty$, where the rate of convergence depends on $\alpha,
\beta$ and $B_R$ denotes a ball of radius $R$ centered at $0\in
T_xM.$ We also say that $M$ is a \textit{manifold of isotropic
  scaling} if $M=\mathcal{IS}(M,g)$ and if the convergence in
\eqref{E:CovConv1} is uniform over $x\in M$ for each $\alpha, \beta\in
\mathbb N^n.$
\end{definition} 

\begin{remark}\label{R:IS}
  If the set of geodesic loop directions $\mathcal L_{x,x} \subset S_x^*M$ through $x$ has measure $0,$ then $x\in \mathcal{IS}(M,g)$  by \cite[Thm. 1]{CH-derivatives}. This implication also holds if the spectral interval $[\leb, \leb+1]$ in the definition of $\phi_\leb$ is replaced by $[\leb, \leb+\eta(\leb)]$ with $\eta(\leb)=o(\leb)$ and $\liminf_{\leb \gives \infty}\eta(\leb) > 0.$ Even for these more general spectral windows, the condition that $M$ is a manifold of isotropic scaling is generic in the space of Riemannian metrics on  any smooth compact manifold. See \eqref{E:NSF} and Section \ref{S:Spectral Conditions} for details. { By \cite[Lem 6.1]{SZ}, the condition that $\abs{\mathcal L_{x,x}}=0$ for all $x\in M$ is generic in the space of Riemannian metrics on a fixed compact smooth manifold $M$, i.e. holds away from a countable union of nowhere dense sets.}
\end{remark}
\noindent { In addition, it is very likely that if $(M,g)$ has no conjugate points, then the condition
\[\lim_{\lambda\gives \infty}\log(\lambda)\cdot \eta_\lambda=\infty\] 
implies $\mathcal {IS}(M,g)=M.$  This was proved by B.~Keeler in \cite{Kee}, but with the convergence in \eqref{E:CovConv1} only holding for $\alpha=\beta=0$. This involves a non-trivial off-diagonal extension of B\'erard's estimates in \cite{Bon,Be}.}

If $x\in \mathcal{IS}(M,g)$, then in any coordinates around $x$ for which $g_{x}=\text{Id},$ the scaling limit of  waves in $\MRW_\leb(M, g)$ around $x$ is universal in the sense that it depends only on the dimension of $M.$ In the language of Nazarov-Sodin \cite{NS2} the asymptotics \eqref{E:CovConv1} imply that if $M=\mathcal{IS}(M,g),$ then the ensembles $\MRW_\leb(M,g)$ have translation invariant local limits. { For ensembles with such translation invariant local limits, Nazarov-Sodin \cite{NS2}, Sarnak-Wigman \cite{SW}, Gayet-Welschinger \cite{GW1,GW2,GW3} and Canzani-Sarnak \cite{CS}, as well as others, prove very interesting results on \textit{non-integral} statistics of the nodal sets of random waves. Such nodal set statistics include the number of connected components, Betti numbers, and topological types. A key step in all these articles, however, is to find some way to  reduce the study of non-integral statistics to integral statistics, such as the volume of the zero set or the number of critical points. Thus, the results on integral statistics in this article may be useful for future work on non-integral nodal set statistics of random waves as well.}


\subsection{Local universality of zeros}\label{S:localzeros}
Our first result concerns the behavior of the nodal set of the
rescaled random wave $\phi_\leb^{x}$ for  $x\in \mathcal IS(M,g)$ (see
Definition \ref{D:IS}). Let us denote by $Z_\leb^{x}$ its Riemannian
hypersurface (i.e. Hausdorff) measure:
\[Z_\leb^x(A): = \mathcal H^{n-1}\lr{\lr{\phi_\leb^{x}}^{-1}(0)\,\cap\, A},\qquad \forall A\subseteq T_{x}M\text{ measurable}.\]
Theorem \ref{T:LocalZeros} concerns the restriction of $Z_{\leb}^{x}$
to various balls $B_r$ of radius $r$ centered at $0\in T_{x}M.$ We
set
\begin{equation}\label{E:NodalMeasDef}
Z_{\leb, r}^{x}:= \frac{\mathbf 1_{B_r}  \cdot 
  Z_\leb^x}{\text{vol}( B_r)} \qquad \text{and}\qquad Z_{\infty, r}^x:=\frac{\mathbf 1_{B_r}\cdot Z_\infty^{x}}{\text{vol}( B_r)}.
\end{equation}
We have denoted by $\mathbf 1_{B_r}$ the characteristic function of
the ball $B_r$ and by $Z_\infty^{x}$ the hypersurface measure on
$(\phi_\infty^{x})^{-1}(0)$ for $\phi_\infty^{x}\in\MRW_1(T_{x}M,
g_{x}).$ For various measures $\mu,$ we write $\mu(\psi)$ for
integration of a measurable function $\psi$ against $\mu.$ In particular, 
\[Z_{\leb, r}^{x}(1)= \frac{\mathcal H^{n-1}\lr{\lr{\phi_\leb^{x}}^{-1}(0)\,\cap\, B_r}}{\text{vol}( B_r) }.\]

\begin{theorem}[Weak Convergence of Zero Set Measures]\label{T:LocalZeros}
Let $(M,g)$ be a smooth, compact, Riemannian manifold of dimension
$n\geq 2$ with no boundary. Fix a non-negative function $r_\leb$ that satisfies $r_\leb=o(\leb)$ as $\leb\gives \infty$.  Let $ \phi_\leb \in \MRW_\leb (M,g)$ and $x \in \mathcal{IS}(M,g)$. Suppose  $\lim_{\leb\gives \infty}r_\leb $ exists and equals $r_\infty\in (0,\infty]$. \\

\noindent{\bf Case 1} $(r_\infty<\infty)\text{:}$ The measures
$Z_{\leb, r_\leb}^{x}$ converge to $Z_{\infty, r_\infty}^x$ weakly in
distribution. That is, for any bounded, measurable function $\psi:T_{x}M\gives \R$
\begin{equation}\label{E:NodalDist 1}
Z_{\leb, r_\leb}^x(\psi)\quad \stackrel{d}{\longrightarrow}
\quad Z_{\infty, r_\infty}^x(\psi)
\end{equation}
as $\leb \to \infty$, where $\stackrel{d}{\longrightarrow} $ denotes convergence in distribution.
 \medskip

\noindent{\bf Case 2} $(r_\infty=\infty)\text{:}$ We have the
following convergence in probability to a constant:
\begin{equation}\label{E:expectation}
Z_{\leb, r_\leb}^x(1) \quad \stackrel{p}{\longrightarrow}
\quad \frac{1}{\sqrt{\pi n}} \frac{\Gamma \left(\frac{n+1}{2} \right)}{\Gamma \left(\frac{n}{2} \right)},
\end{equation}
as $\leb\gives \infty.$ In particular, 
\begin{equation}\label{E: equal to infty}
\lim_{\leb\gives \infty}\Var\left[Z_{\leb, r_\leb}^x(1)\right]=0.
\end{equation}
\end{theorem}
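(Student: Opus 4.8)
The plan is to reduce the weak convergence of the random measures $Z^x_{\lambda,r_\lambda}$ to two ingredients: (i) the Kac--Rice formula, which expresses the moments of $Z^x_{\lambda,r}(\psi)$ as explicit integrals built from the covariance kernel $\Pi^x_\lambda$ and its first two derivatives; and (ii) the $C^\infty$ convergence $\Pi^x_\lambda\to\Pi^x_\infty$ guaranteed by the hypothesis $x\in\mathcal{IS}(M,g)$. Since a sequence of random variables taking values in a bounded interval converges in distribution as soon as all of its moments converge, the strategy in Case 1 is to show that for every fixed bounded measurable $\psi$ and every $m\ge 1$,
\[
\E{\left(Z^x_{\lambda,r_\lambda}(\psi)\right)^m}\;\longrightarrow\;\E{\left(Z^x_{\infty,r_\infty}(\psi)\right)^m}
\]
as $\lambda\to\infty$. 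Actually one must be slightly careful: $Z^x_{\lambda,r}(\psi)$ is not a priori bounded, so I would first reduce to $\psi\ge 0$ by splitting into positive and negative parts, then note that $Z^x_{\lambda,r_\lambda}(\psi)\le \|\psi\|_\infty\, Z^x_{\lambda,r_\lambda}(1)$ and that the first moments of $Z^x_{\lambda,r_\lambda}(1)$ converge (to a finite limit, the local Kac--Rice density integrated over $B_{r_\infty}$), so the family is uniformly integrable; moment convergence then upgrades to convergence in distribution via the method of moments together with a check that the limiting moment sequence determines the law (which holds because $Z^x_{\infty,r_\infty}(\psi)$ has Gaussian-type tails, or because it is bounded when $\psi$ is).

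The core of Case 1 is therefore the $m$-th moment computation. I would invoke the Kac--Rice Theorem (Theorem \ref{T:KR}) for the $m$-point intensity of the zero set of $\phi^x_\lambda$. The two hypotheses of that theorem—non-degeneracy of the joint law of $(\phi^x_\lambda(u_1),\dots,\phi^x_\lambda(u_m))$ at distinct points, and the $1$-jet spanning condition—are exactly what the discussion before the statement of Theorem \ref{T:LocalZeros} promises can be verified at a point of isotropic scaling; concretely, non-degeneracy of the limiting Gaussian vector follows from Proposition \ref{P:Surjective zeros} (frequency $1$ functions separate values at distinct points), and the $C^\infty$ convergence of kernels transfers this non-degeneracy to $\phi^x_\lambda$ for large $\lambda$ with uniform lower bounds on the relevant determinants over compact sets of configurations. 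Once Kac--Rice applies, the $m$-th moment is
\[
\E{\left(Z^x_{\lambda,r_\lambda}(\psi)\right)^m}=\frac{1}{\vol(B_{r_\lambda})^m}\int_{B_{r_\lambda}^m}\left(\prod_{i=1}^m\psi(u_i)\right)\rho_\lambda(u_1,\dots,u_m)\,du_1\cdots du_m,
\]
where $\rho_\lambda$ is the $m$-point zero intensity, a fixed continuous functional of $\Pi^x_\lambda$ and its derivatives evaluated at the $u_i$. The $C^\infty$ convergence of $\Pi^x_\lambda$ to $\Pi^x_\infty$, uniform on $B_{r_\infty}^m$, forces $\rho_\lambda\to\rho_\infty$ uniformly on compact subsets of the open configuration space (distinct points), and a diagonal-neighborhood integrability estimate—again uniform in $\lambda$ by the uniform kernel convergence and uniform nondegeneracy bounds—controls the contribution near the large diagonal. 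Dominated convergence then yields convergence of the $m$-th moment to the corresponding moment of $Z^x_{\infty,r_\infty}(\psi)$, completing Case 1.

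For Case 2, the point is that when $r_\infty=\infty$ the normalization $\vol(B_{r_\lambda})$ grows, and one shows both $\E{Z^x_{\lambda,r_\lambda}(1)}\to c_n$ and $\Var[Z^x_{\lambda,r_\lambda}(1)]\to 0$, which together give convergence in probability to the constant $c_n$, and the constant is identified by the first-moment Kac--Rice density of a frequency $1$ random wave on $\R^n$, a classical computation giving $c_n=\frac{1}{\sqrt{\pi n}}\,\Gamma(\tfrac{n+1}{2})/\Gamma(\tfrac{n}{2})$. The expectation statement follows from the first-moment Kac--Rice formula: $\E{Z^x_{\lambda,r_\lambda}(1)}=\vol(B_{r_\lambda})^{-1}\int_{B_{r_\lambda}}\rho^{(1)}_\lambda(u)\,du$, where $\rho^{(1)}_\lambda(u)$ converges (uniformly on compacts, by isotropic scaling) to the constant $c_n$ and is uniformly bounded, so the average over the growing ball converges to $c_n$ as well. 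The variance estimate is the main obstacle: one writes $\Var[Z^x_{\lambda,r_\lambda}(1)]=\vol(B_{r_\lambda})^{-2}\int_{B_{r_\lambda}^2}\big(\rho^{(2)}_\lambda(u,v)-\rho^{(1)}_\lambda(u)\rho^{(1)}_\lambda(v)\big)\,du\,dv$, and must show that this double integral is $o(\vol(B_{r_\lambda})^2)$. For $u,v$ far apart (at fixed Euclidean distance, but still within $B_{r_\lambda}$) the limiting two-point correlation $\Pi^x_\infty(u,v)$ decays like $\|u-v\|^{-(n-1)/2}$ by the Bessel asymptotics in \eqref{E:LimitDef}, so the truncated two-point intensity $\rho^{(2)}_\infty(u,v)-\rho^{(1)}_\infty(u)\rho^{(1)}_\infty(v)$ decays fast enough to be integrable against $du\,dv/\vol(B)$ in the limit; for $u,v$ close, the near-diagonal Kac--Rice bound again gives an integrable singularity whose total mass is $O(\vol(B_{r_\lambda}))=o(\vol(B_{r_\lambda})^2)$. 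The delicate part is making the clustering estimate uniform in $\lambda$: one does not have a quantitative rate in \eqref{E:CovConv1}, so the argument must combine the uniform-on-compacts convergence of $\rho^{(2)}_\lambda$ with a soft tightness/decay bound valid for all large $\lambda$—this is precisely the kind of "integrating local estimates" argument the introduction flags as new, and I expect it to be the technical heart of Case 2.
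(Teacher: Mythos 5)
Your proposal follows essentially the same route as the paper: Kac--Rice for the $m$-point intensities, $C^\infty$ convergence of $\Pi^x_\lambda\to\Pi^x_\infty$ from the isotropic-scaling hypothesis, the method of moments with the Donnelly--Fefferman bound supplying boundedness (hence moment-determinacy) of $Z^x_{\infty,r_\infty}(\psi)$ in Case 1, and a decorrelation-plus-diagonal argument for the vanishing variance in Case 2.

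The one place you deviate a bit is in how Case 2 handles the ``far'' regime. You lean on the quantitative Bessel decay $\Pi^x_\infty(u,v)=O(|u-v|^{-(n-1)/2})$ to argue the truncated two-point intensity is integrable against $du\,dv/\vol(B_{r_\lambda})$, and you flag as delicate the lack of a rate in \eqref{E:CovConv1}. The paper sidesteps this entirely with a soft argument: introduce a $\lambda$-dependent cutoff at $|u-v|\ge r_\lambda^{1/2}$ (which tends to infinity, so decorrelation of the limit kicks in, transferred to $\phi^x_\lambda$ by the uniform-on-$B_{r_\lambda}$ kernel convergence with no rate needed), and observe that the complementary near-diagonal set has vanishing relative measure in $B_{r_\lambda}^2$, so its contribution dies once one knows the Kac--Rice integrand is locally $L^1$ uniformly in $\lambda$. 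This avoids any explicit use of the Bessel rate and is a cleaner way to package the ``no quantitative rate available'' difficulty you correctly identified. Also, your digression about splitting $\psi$ into positive/negative parts and uniform integrability is unnecessary: $|Z^x_{\infty,r}(\psi)|\le\norm{\psi}_\infty Z^x_{\infty,r}(1)$ is already deterministically bounded by Donnelly--Fefferman for any bounded $\psi$, which is all one needs both for moment-determinacy of the limit and for domination in the convergence of moments.
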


After posting an early version of this article, G. Peccati brought to our attention that the convergence in distribution \eqref{E:NodalDist 1} for the nodal set measure $Z_{\lambda, r_\lambda}^x$ in balls of size $R/\lambda$ can be obtained directly from the $C^\infty$ scaling asymptotics of the covariance function. He kindly allowed us to include his argument, which we reproduce in the proof of Theorem \ref{T:LocalZeros} when $r_\infty<\infty$ (see Section  \ref{S:small-ball-local-zeros}).

\begin{remark}\label{R:zeros-jets1}
Just as in Remark \ref{R:jets}, the function $\psi$ in \eqref{E:NodalDist 1} can be allowed to depend on the jets $D^j\phi_\leb,\,j\geq 1.$ More precisely, $\psi(u)$ can be replaced by $\psi(u,W(u)),$ where $W$ is a random field so that $u\mapsto (\phi_\leb^x(u), W(u))$ is a continuous  Gaussian field with values in $\R^{1 + k}$ and $\psi:\R^n\x C^0(\R^n, \R^k)\gives \R$ is bounded and continuous when $C^0(\R^n, \R^k)$ is equipped with the topology of uniform convergence on compact sets. Since $(\phi_\leb^x(u), D\phi_\leb^x(u), D^2\phi_\leb^x(u),\ldots)$ is a smooth Gaussian field, we may take $W(u)=\lr{D^j\phi_\leb(u),\,j\geq 1}.$ Similarly, in \eqref{E:expectation} and \eqref{E: equal to infty}, the function $1=1(u)$ can be replaced by $\psi(W(u))$ where again $\psi:C^0(\R^n,\R^k)\gives \R$ is bounded and continuous in the topology of uniform convergence on compact sets. The only difference is that  \eqref{E:expectation} then reads
  \[Z_{\leb, r_\leb}^x(\psi) -\E{Z_{\infty,r_\leb}^x(\psi)} \quad \stackrel{p}{\longrightarrow}
\quad 0. \]
\end{remark}
\begin{remark}\label{R:generalsets} 
The relations \eqref{E:expectation} and \eqref{E: equal to
  infty} hold even if the balls
 $B_{r_\leb}$ in the definition of $Z_{\leb, r_\leb}^{x}$ are replaced by
 any $\leb-$dependent sets $A_{\leb,r_\leb}$ for which the diameter is bounded
 above and below by a constant times $r_\leb$, and whose volume tends
 to infinity when $r_\leb\gives \infty.$
\end{remark}
\begin{remark}\label{R:localunifzeros} 
The rates of convergence in \eqref{E:NodalDist 1}-\eqref{E: equal to
  infty} - even after the generalizations indicated in Remarks
\ref{R:zeros-jets1} and \ref{R:generalsets} - are uniform as $x $
varies over a compact set $ S\subset\mathcal{IS}(M,g)$ as long as
the convergence in \eqref{E:CovConv1} is uniform over $S.$
\end{remark}
\noindent Theorem \ref{T:LocalZeros} is proved in Section \ref{S:KR Zeros} and Section \ref{S: Limit theorem}. 


\subsection{Local universality of critical points}\label{S:localcrits}
We state in this section our results on critical points of random
waves, which have been extensively studied
(c.f. e.g., \cite{CW,CMW,Nic}). Let $x\in M$ and for each
$r>0$ define the normalized counting measure
\begin{equation}
\C_{\leb,r}^x:=\frac{1}{\vol(B_r)}\sum_{\substack{d\phi_\leb^x(u)=0\\
  u \in B_r}} \delta_u \label{E:LocalCritMeas}
\end{equation}
of critical points in a ball of radius $r.$ We define
$\C_{\infty, r}^{x}$ in the same way as $\C_{\leb, r}^{x}$ but with $\phi_\leb^{x}$ replaced by $\phi_\infty^{x}\in \MRW_1(T_{x}M, g_{x}),$  and  continue to write
$\mu(\psi)$ for the pairing of a measure $\mu$ with a function  $\psi$. For example, 
\[\C_{\leb,r}^x(1)=\frac{\#\{ u \in B_r: \; d\phi_\leb^{x}(u)=0  \}}{\vol(B_r)}.\]

 \begin{theorem}\label{T:LocalCrits}
Let $(M,g)$ be a smooth, compact, Riemannian manifold of dimension
$n\geq 2$ with no boundary. Fix a non-negative function $r_\leb$ that
satisfies $r_\leb=o(\leb)$ as $\leb\gives \infty$. Let $ \phi_\leb \in
\MRW_\leb (M,g)$ and $x \in \mathcal{IS}(M,g)$. Suppose that
$\lim_{\leb\gives \infty}r_\leb$ exists and equals $r_\infty \in
(0,\infty].$ 
\begin{description}
\item[Case 1. ($r_\infty<\infty$)] For $k=1,2$ and
  each bounded measurable function $\psi:T_xM\to\R$ 
\begin{equation}\label{E:CritDist1}
\lim_{\leb \to \infty}\E{\C_{\leb, r_\leb}^{x}(\psi)^k}=
\E{\C_{\infty,r_\infty}^{x}(\psi)^k}.
\end{equation}
\item[Case 2. ($r_\infty=\infty$)] We have
\begin{equation}\label{E:CritDist2}
\lim_{\leb\gives \infty}\Var[\C_{\leb, r_\leb}^{x}(1)]=\E{\C_{\infty,1}^{x}(1)}.
\end{equation}
This limit is the expected
number of critical points in a ball of radius $1$ for frequency $1$
random waves on $\R^n,$ which is independent of $x.$
\end{description}
 \end{theorem}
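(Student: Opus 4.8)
The plan is to prove Theorem~\ref{T:LocalCrits} via the Kac--Rice formula applied to the vector field $d\phi_\leb^x$, mirroring the structure of the proof of Theorem~\ref{T:LocalZeros} but now for a random map $\R^n\to\R^n$ rather than $\R^n\to\R$. The first step is to establish that the Kac--Rice hypotheses of Theorem~\ref{T:KR} hold for $d\phi_\leb^x$ in any fixed ball $B_R$: namely, that the joint distribution of $\big(d\phi_\leb^x(u_1),\dots,d\phi_\leb^x(u_m)\big)$ is non-degenerate for distinct $u_1,\dots,u_m$, and that $d\phi_\leb^x$ is a submersion onto $\R^n$ in the relevant jet sense. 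For fixed $\leb$ this will follow from Proposition~\ref{P:Surjective crits} (frequency $1$ functions separate $1$-jets, which controls the critical-point problem) together with the $C^\infty$ convergence in \eqref{E:CovConv1} of $\Pi_\leb^x$ to $\Pi_\infty^x$: the limiting covariance structure is non-degenerate, and non-degeneracy is an open condition on finitely many covariance matrix entries, so it persists for $\leb$ large. This yields the $k$-th moment formula
\[
\E{\C_{\leb,r}^x(\psi)^k}=\frac{1}{\vol(B_r)^k}\int_{B_r^k}\psi(u_1)\cdots\psi(u_k)\,\rho_{k,\leb}^x(u_1,\dots,u_k)\,du,
\]
where $\rho_{k,\leb}^x$ is the $k$-point intensity obtained from the Kac--Rice integrand, and likewise for $\phi_\infty^x$.

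The second step is to pass to the limit $\leb\to\infty$ inside this integral. The Kac--Rice density $\rho_{k,\leb}^x(u_1,\dots,u_k)$ is an explicit expression built from the joint Gaussian density of $\big(d\phi_\leb^x(u_i)\big)_i$ evaluated at $0$ and the conditional expectation of $\prod_i\abs{\det \mathrm{Hess}\,\phi_\leb^x(u_i)}$ given $d\phi_\leb^x(u_i)=0$; all the ingredients are continuous functions of the entries of the covariance matrix of $\big(\phi_\leb^x, D\phi_\leb^x, D^2\phi_\leb^x\big)$ at the points $u_i$. By \eqref{E:CovConv1}, these entries converge to those of $\phi_\infty^x$, uniformly on $B_R^k$; since the Gaussian expectation of the absolute value of a determinant is a locally bounded continuous function of a non-degenerate covariance matrix, $\rho_{k,\leb}^x\to\rho_{k,\infty}^x$ uniformly on compact subsets away from the diagonal, and the integrable singularity along diagonals is controlled uniformly in $\leb$ by the non-degeneracy established in step one (a standard Kac--Rice estimate gives $\rho_{k,\leb}^x(u_1,\dots,u_k)=O\big(\prod_{i<j}\abs{u_i-u_j}^{-c}\big)$ for some $c<$ (the dimension), uniformly for large $\leb$). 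Dominated convergence then yields Case~1, \eqref{E:CritDist1}, for every $k$ for which the limiting $k$-th moment is finite; finiteness for $k=1,2$ follows from the explicit frequency $1$ Kac--Rice computation (the one-point density is a constant, and the two-point density is integrable on $B_{r_\infty}^2$ by the non-degeneracy near the diagonal).

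For Case~2, when $r_\infty=\infty$, the variance is $\Var[\C_{\leb,r_\leb}^x(1)]=\E{\C_{\leb,r_\leb}^x(1)^2}-\big(\E{\C_{\leb,r_\leb}^x(1)}\big)^2$, and I would split the second-moment integral over $B_{r_\leb}^2$ into the ``diagonal'' region $\abs{u_1-u_2}\le T$ and the ``off-diagonal'' region $\abs{u_1-u_2}>T$ for a large fixed $T$. On the off-diagonal region, clustering/mixing for the frequency $1$ random wave (the covariance $\Pi_\infty^x$ and its derivatives decay polynomially in $\abs{u-v}$, since $J_\nu$ decays) shows $\rho_{2,\infty}^x(u_1,u_2)\to\big(\rho_{1,\infty}^x\big)^2$ as $\abs{u_1-u_2}\to\infty$ with an integrable-against-$\vol(B_{r_\leb})^{-1}$ error, so the off-diagonal part of $\E{\C_{\leb,r_\leb}^x(1)^2}$ cancels $\big(\E{\C_{\leb,r_\leb}^x(1)}\big)^2$ up to $o(1)$; on the diagonal region, $\vol(B_{r_\leb})^{-1}\cdot(\text{number of translates of }B_T\text{ fitting in }B_{r_\leb})\to \vol(B_T)^{-1}$, and the inner integral converges by translation invariance of $\phi_\infty^x$ to $\vol(B_T)\cdot\E{\C_{\infty,1}^x(1)}$ after accounting for the edge effect, giving $\E{\C_{\infty,1}^x(1)}$ in the limit; independence of $x$ is immediate since $\Pi_\infty^x$ in isotropic coordinates depends only on $n$. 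The main obstacle is the uniform-in-$\leb$ control of the Kac--Rice $k$-point densities near the diagonal in step two: one must show the determinant factors do not blow up faster than an integrable rate even as two points collide and even as $\leb$ varies, which requires a quantitative version of the non-degeneracy in \eqref{E:CovConv1}, i.e. a uniform lower bound on the smallest eigenvalue of the relevant conditional covariance matrices; handling the higher moments $k\ge 3$ (needed for the full statement of Case~1) also requires knowing the limiting $k$-point densities are locally integrable, which is where the quantitative frequency $1$ estimates from \S\ref{S:KR crits} enter.
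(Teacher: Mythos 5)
Your overall architecture (Kac--Rice hypotheses for $d\phi_\leb^x$ via $C^\infty$ convergence of covariances, then dominated convergence for moments, then decorrelation at large $|u-v|$ for the variance when $r_\infty=\infty$) is the paper's architecture, and most of your steps are sound. There is, however, a real gap in your treatment of the diagonal, which is the crux of finiteness of the second moment and hence of the dominated-convergence step.

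You repeatedly attribute local integrability of the two-point Kac--Rice density near the diagonal to ``non-degeneracy near the diagonal'' and to ``a uniform lower bound on the smallest eigenvalue of the relevant conditional covariance matrices.'' This is not the right mechanism, and no such lower bound exists: as $v\to u$ the Gaussian vector $(d\phi_\infty(u),d\phi_\infty(v))$ degenerates, and its density at $(0,0)$ blows up; in fact $\Den_{(d\phi_\infty(u),d\phi_\infty(v))}(0,0)=O(|u-v|^{-n})$, which by itself is not integrable on $B_R^2$. What rescues the Kac--Rice integral is a \emph{compensating} vanishing of the conditional expectation factor, $Y_\infty(u,v)=O(|u-v|^2)$, so that the full integrand is $O(|u-v|^{-(n-2)})$ and is locally integrable. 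Establishing each of these two rates is a nontrivial quantitative computation (the paper's Propositions~\ref{P:L1Density} and~\ref{P: bound of numerator} in \S\ref{S:critsvariance}): the lower bound $\det\Sigma(u,v)\gtrsim|u-v|^{2n}$ is proved by a Gram-identity argument reducing to non-degeneracy of the joint law of first and directional second derivatives at a single point (Lemma~\ref{L:Submersion crits}), and the bound on $Y_\infty$ requires a mean-value/Taylor argument on the conditioned field. Your proposal replaces all of this with an assertion that it is ``standard,'' and identifies the relevant input as \S\ref{S:KR crits} rather than \S\ref{S:critsvariance}; but \S\ref{S:KR crits} only verifies the off-diagonal non-degeneracy needed to \emph{apply} Kac--Rice, not the diagonal integrability needed to \emph{use} it.

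Related to this, your claim that a uniform-in-$\leb$ estimate $\rho_{k,\leb}^x(u_1,\dots,u_k)=O\bigl(\prod_{i<j}|u_i-u_j|^{-c}\bigr)$ with $c<n$ holds for all $k$ is not available; the paper proves it only for $k=2$ and explicitly flags the higher moments as open (indeed the theorem statement for Case~1 is conditional on finiteness of the limit, and the paper records finiteness only for $k=1,2$). Once you restrict to $k\le2$ and replace the eigenvalue-lower-bound picture with the two compensating rate estimates above, the dominated-convergence step in Case~1 and the factorial-moment/decorrelation argument in Case~2 go through as in the paper.
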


 \begin{remark}\label{R:gensetscrits} 
   We prove in Section \ref{S:critsvariance} that the
   moments $\E{(\C_{\infty, r_\infty}^{x}(\psi))^k}$ are finite
   for $k=1,2.$ In particular, we show in Section \ref{S: Limit theorem}
   that if $\dim(M)=2,$ then $x \in M$
   \begin{equation}\label{E:crit loc}
   \E{  \C_{\infty,1}^x(1)}=\frac{1}{4\pi \sqrt{6}}.
   \end{equation}
Also, just as in Remark \ref{R:generalsets}, the balls
 $B_{r_\leb}$ in \eqref{E:CritDist2} can be replaced by
 any $\leb-$dependent sets $A_{\leb,r_\leb}$ for which the diameter is bounded
 above and below by a constant times $r_\leb$ and whose volume tends
 to infinity with $r_\leb.$
 \end{remark}
\begin{remark}\label{R:crits-jets1}
  Just as in Remark \ref{R:jets}, both  $\psi$ in \eqref{E:CritDist1} and the function $1$ being integrated against $\C_{\leb, r_\leb}^x$  in \eqref{E:CritDist2} can be replaced by a bounded continuous function of the jets of $\phi_\leb,$ giving information for instance about critical points filtered by critical value.  
\end{remark}
\begin{remark}\label{R:localunifcrits}
  Just as in Remark \ref{R:localunifzeros}, the rates of convergence
  in \eqref{E:CritDist1} and \eqref{E:CritDist2} - even after the
  generalizations indicated in Remaks \ref{R:gensetscrits} and
  \ref{R:crits-jets1} - are uniform over
  $x\in S\subset \mathcal{IS}(M,g)$ if \eqref{E:CovConv1} is uniform
  over $S.$
\end{remark}

On the $n$-dimensional flat torus, Nicolaescu \cite{Nic} obtained several results related to Theorem \ref{T:LocalCrits} in the $r_\infty<\infty$ case. We prove Theorem \ref{T:LocalCrits} in Section \ref{S:KR crits} and Section \ref{S: Limit theorem}.


\subsection{Sufficient conditions for isotropic scaling and short-range correlations}\label{S:Spectral Conditions}
To apply Theorems \ref{T:LocalZeros} and \ref{T:LocalCrits} one must verify that some $x\in M$ belongs to $\mathcal IS(M,g),$ the points of isotropic scaling (Definition \ref{D:IS}).  { This is typically difficult to do directly, but can be done by hand for eigenvalue windows of the form $[\lambda, \lambda+1]$ on simple examples, { such as flat tori}, by verifying a condition} about the geodesics through $x.$ Namely, for $x,y\in M$ denote by 
\begin{equation}\label{E:GeodArcs}
\mathcal L_{x,y}=\{\xi \in S_{x}M:\; \exists t>0 \text{ s.t. }\exp_{x}(t\xi )=y\}
\end{equation}
the set of directions that generate geodesic arcs from $x$ to $y.$ Here, $S_xM=\{\xi \in T_xM :\; |\xi|_{g(x)}=1\}$ is the unit sphere in $T_xM.$ Theorem 1 in
\cite{CH-derivatives} shows that 
\begin{equation}\label{E:NSF}
\abs{\mathcal L_{x,x}}=0 \quad \Longrightarrow \quad x\in \mathcal
IS(M,g),
\end{equation}
where $\abs{\mathcal L_{x,x}}$ denotes the volume of $\mathcal L_{x,x}$ inside
$S_xM.$ There is a similar sufficient condition for the short-range correlations assumption in Theorem
\ref{T:Global}: 
\begin{equation}\label{E:offdiagdecay}
\abs{\mathcal L_{x,y}}=0 \;\;\; \forall ~x,y\in M \quad \Longrightarrow
                          \quad RW_\leb(M,g)\text{ have short-range correlations}.
\end{equation}
Indeed, when $\abs{\mathcal L_{x,y}}=0$ for all $x,y\in M$ and any $\ep>0$ \cite[Thm. 3.3]{Saf} gives that for all $P,Q$ pseudodifferential operators on $M$ of with $\text{ord}P=\text{ord}Q$,
\begin{equation}
\sup_{x,y:\, d(x,y) \geq \ep} |P_x Q_y \Pi_{\leb}(x,y)|=o(\leb^{\text{ord}P+\text{ord}Q}),\label{E:Saf}
\end{equation}
as $\leb \to \infty$.  Here, the subscripts $x$ and $y$ indicate that
$P$ and $Q$ are acting on the $x$ and $y$ variables,
respectively. Note that $\Delta^{-\frac{m}{2}}
\Pi_{\leb}(x,y)=\leb^{-\frac{m}{2}} R  \Pi_{\leb}(x,y)$ with $R$ being an order zero pseudodifferential operator for any $m \in \mathbb N$. If we have that $\text{ord}P=\text{ord}Q+m$ with $m \in \mathbb N$, then 
\[P Q \Pi_{\leb}(x,y)= \leb^{-\frac{m}{2}} \,P Q \Delta^{\frac{m}{2}} R \Pi_{\leb}(x,y),\]
and the result follows since $Q \Delta^{\frac{m}{2}} R$ has the same
order as $P$. Combining \eqref{E:Saf} with Remark $3$ after Theorem
$2$ in \cite{CH-derivatives} yields \eqref{E:offdiagdecay}.

{ By \cite[Lem 6.1]{SZ}, the condition that $\abs{\mathcal L_{x,x}}=0$ for all $x\in M$ is generic in the space of Riemannian metrics on a fixed compact smooth manifold $M.$ {It is likely that}  a similar argument would show that $\abs{\mathcal L_{x,y}}=0$ for all $x,y\in M$ is also generic.  It is known, however, that {the condition that} $\abs{\mathcal L_{x,y}}=0$ holds for all $x,y\in M$ if  $(M,g)$ is negatively curved or, more generally, has no conjugate points.
}


\subsection{Novel aspects of the paper}
As mentioned above, this article gives what appear to be the first variance estimates for $Z_\leb, \text{Crit}_\leb$ that hold on a large class of smooth Riemannian manifolds. This is in contrast to the many articles mentioned in Section  \ref{S:Prior} that treat integrable models such as spheres and torii. The main new ingredients are the following. First, we use the spectral theory results in \cite{CH,CH-derivatives}, which show that the scaling limits of random waves at non-self focal points on $(M,g)$ are frequency $1$ random waves on $\R^n$ (see \eqref{E:CovConv1}). We do not use the spectral theory results in our proofs directly, but the fact that for a generic metric on $M$ all points are non self-focal means that the techniques in this article apply to generic $(M,g)$. Second, we give several new arguments about frequency $1$ random waves, which allow us to apply the Kac-Rice formula to zeros and critical points of frequency $1$ random waves (see around \eqref{E:separate} and Section  \ref{S: limiting ensemble} below). Finally, we give an essentially combinatorial argument for patching together our local variance estimates from Theorems \ref{T:LocalZeros} and \ref{T:LocalCrits} to obtain the global results in Theorem \ref{T:Global} (see Section  \ref{S: other cors}). 

Indeed, our method for studying zeros and critical points of random waves relies on the Kac-Rice formula. Many previous articles (e.g. \cite{CW, CMW, KKW, Neu, Nic, RW, Wig2, Wig, Zel2}) use the Kac-Rice formula to study the expected value and variance of the size of zero sets and number of critical points for random waves on flat tori and round spheres. In the vast majority of these cases, the Kac-Rice formula is not used directly. Instead, the authors explain that they cannot verify the non-degeneracy or the $1$-jet spanning hypotheses of the Kac-Rice Theorem (Theorem \ref{T:KR}). They then use modified, or approximate, Kac-Rice formulae adapted to each setting. In some instances this is unavoidable because the non-degeneracy hypothesis (2) can fail globally in the presence of many symmetries. 

We prove in contrast that the Kac-Rice formula can be applied to study \textit{all} moments of the zero and critical points sets of for frequency $1$ random waves $\phi_\infty$ on $\R^n.$ As we explained above, $\phi_\infty$ are the scaling limits of $\phi_\leb$ in balls of shrinking radii around a fixed $x\in M$ as long as $x$ is a point of isotropic scaling (see Definition \ref{D:IS} and Section \ref{S:KR Zeros} - Section \ref{S:KR crits}). To prove Theorems \ref{T:LocalZeros} and \ref{T:LocalCrits}, we use the Kac-Rice formula to write integral expression for moments of the measures of integration over the zero and critical points sets of the limiting random waves $\phi_\infty$ in wavelength balls around $x.$ For the first and second moments we are then able to check that we can apply the Kac-Rice formula directly for $\phi_\leb^x$ when $\leb$ is large but finite and that the $\leb\gives \infty$ limit of the resulting expressions are the original expressions for the limiting frequency one random waves on $T_xM$.

Proving that we can apply the Kac-Rice formula to study zeros and critical points of $\phi_\infty$ requires several new arguments that rely on analysis of frequency $1$ functions (i.e. smooth functions in $\ker(\Delta_{\R^n}-1)$) on $\R^n.$ For instance, combining Propositions \ref{P:Surjective zeros} and \ref{P:Surjective crits}, we find that frequency $1$
functions separate $1-$jets. More precisely, given $m$ distinct points $u_1,\ldots, u_m\in\R^n$ and constants $\set{\alpha_i, \beta_{i,j},\, \, 1\leq i \leq m,\, 1\leq j \leq n}$ there exists a smooth real-valued function $f\in \ker(\Delta_{\R^n}-1)$ such that
\begin{equation}\label{E:separate}
f(u_i)= \alpha_i\qquad \text{and}\qquad \dell_j f(u_i)=\beta_{i,j}.
\end{equation}
If $f$ were allowed to be any smooth function, then such a result is straightforward. However, with the restriction that $f$ have frequency precisely $1,$ we could not find such results in the literature. Let us also mention that after the writing of this article, it was explained to us by M. Sodin that there is a alternative complex analytic approach to proving such facts about frequency $1$ functions. 

This article also includes a new argument (see Section \ref{S: other cors}) for how to patch together local variance estimates for nodal and critical sets on balls of shrinking radii $\leb^{-\ep}$ to obtain quantitative upper bounds on the variance of the volumes of the zero and critical point sets of $\phi_\leb$ (see \eqref{E:var} and \eqref{E:var crits}). In ``integrating'' the local variance estimates, we control neither the rate at which the covariance kernels $\Pi_{\leb}$ of the random waves $\phi_\leb$ converge pointwise to their scaling limits near various $x\in M$ (see Definition \ref{D:IS}) nor the rate at which off-diagonal correlations decay (see Definition \ref{D:SRC}). Nonetheless, we are able to obtain quantitative variance estimates by using lower bounds on the volume of the set of points $(x,y)\in M\times M$ at which the spectral projection (covariance) kernel $\Pi_{\leb}(x,y)$ is already measurably small (of order $\leb^{-\frac{n-1}{2}}$, where $n$ is the dimension of $M$). This is the content of Section \ref{S: other cors} and is related in spirit to the work of Jakobson-Polterovich \cite{JP}. 
\subsection{Acknowledgements} 
We are grateful to an anonymous referee for finding a gap in a
previous version of this article and for significantly improving some aspects of the exposition. That version concerned a wide class
of integral statistics (not just zeros and critical points) of
monochromatic random waves. However, there was an error in the
previous incarnation of what are now Propositions \ref{P:Surjective zeros}
and \ref{P:Surjective crits}. The new propositions fix the mistake for
the special cases of zeros and critical points. We leave the extension
of the results in this paper to more general integral statistics for
future work. The second author would also like to thank Damien Gayet
and Thomas Letendre for several useful discussions pertaining to the
arguments in Proposition \ref{P:L1Density}.

\subsection{Outline} The rest of our paper is organized as
follows. First, in Section \ref{S: limiting ensemble}, we recall a variant
of the Kac-Rice formula and prove that it can be applied to study all
moments for the measures of integration over the zeros and critical
points of frequency $1$ random waves on $\R^n.$ We then complete the
proof of the our local results (Theorems \ref{T:LocalZeros} and
\ref{T:LocalCrits}) in Section \ref{S: Limit theorem}. Finally, in Section
\ref{S: other cors}, we explain how to use the assumption that random
waves have short-range correlations on $(M,g)$ (see Definition
\ref{D:SRC}) to prove our global results (Theorem \ref{T:Global}).

\section{Analysis of Frequency $1$ Random Waves on $\R^n$}\label{S: limiting ensemble}
Let $\phi_\infty$ be a frequency $1$
random wave on $\R^n$ (see \eqref{E:LimitDef}). We prove in this section that the following variant of the
Kac-Rice formula (an amalgam of Aza\"is-Wshebor \cite[Thms.\,6.2,\,6.3,\,\,Props.\,6.5,\,6.12]{AW}) can be applied to study all the moments for the measures of integration over its zero and critical point sets. In what follows we write $\mathcal H^k$ for the dimension $k$ Hausdorff measure on $\R^n.$

\begin{namedtheorem}[Kac-Rice]\label{T:KR}
Let $U$ be an open subset of $\R^n$ and $X:U \gives \R^{k}$ be a
Gaussian field with $k \leq n.$ Fix $m\in \mathbb N,$ and suppose that      \smallskip                  
  \begin{enumerate}
  \item $X$ is almost surely $C^2.$\smallskip
  \item {\bf Non-degeneracy:}  For every collection of distinct points $\set{u_j}_{j=1}^m$ the Gaussian vector $\lr{X(u_j)}_{j=1}^m$ has a non-degenerate distribution. 
  \end{enumerate}\smallskip
If $k<n$ suppose in addition that
\begin{enumerate}
 \item[(3)] {\bf 1-Jet Spanning Property:} For $u\in U,$ the joint
    distribution of $X(u)$ and the Jacobian $\lr{\dell_i
      X_j(u)}_{\substack{1\leq i \leq n \\ 1\leq j \leq k}}$ is a
    non-degenerate Gaussian.\smallskip
\end{enumerate}
Then, if $k<n$, 
\begin{equation}
\E{\mathcal H^{n-k}\!\lr {\set{X=\a}\cap B}^m}\!\!=\!\!\int_{B^m}\!\!\! Y_{_{\!m,X}}\lr{u_1,\ldots, u_m}\!\Den_{X(u_1),\ldots, X(u_m)}\!\!\lr{\a,\ldots, \a}\!du_1\dots du_m\label{E:KR}
\end{equation}
for every measurable Borel set $B\subseteq U,$ where
\begin{equation*}
 Y_{_{\!m,X}}(u_1,\ldots, u_m)=\E{\prod_{j=1}^m \left[\det\lr{dX(u_j)^* dX(u_j)}\right]^{1/2}\,\Big|\, X(u_j)=\a,\, j=1,\ldots, m}\label{E:KRDen}
\end{equation*}
and $\Den_{X(u_1),\ldots, X(u_m)}$ is the density of $\lr{X(u_j)}_{j=1}^m$. \\ \ \\
If $k=n,$ then equation \eqref{E:KR} holds for any Borel set
$B\subseteq U$ with the left hand side replaced by the factorial moment:
\[\E{\prod_{j=1}^m \left[\mathcal H^{n-k}\lr {\set{X=\a}\cap B}-j+1\right]}.\]
\end{namedtheorem}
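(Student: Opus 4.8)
The statement is an amalgam of results of Aza\"is--Wschebor, so the plan is not to reprove the Rice formula from scratch but to split the claim into the three regimes it really contains — ($k<n$, $m=1$), ($k<n$, general $m$), and ($k=n$) — and in each to reduce to the matching statement in \cite{AW} after checking that hypotheses (1)--(3) as phrased here are enough to apply it. The single analytic ingredient underneath everything is the coarea formula for the $C^1$ map $X$, together with a standard $\ep\downarrow 0$ regularization of the Dirac mass at $\a$.

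First, for $m=1$ and $k<n$. Apply the coarea formula with weight $g_\ep(u)=\vol(B_\ep)^{-1}\mathbf 1_B(u)\mathbf 1_{\{\norm{X(u)-\a}<\ep\}}$ to obtain $\vol(B_\ep)^{-1}\int_{B\cap\{\norm{X-\a}<\ep\}}\sqrt{\det\lr{dX(u)dX(u)^*}}\,du=\vol(B_\ep)^{-1}\int_{\{\norm{y-\a}<\ep\}}\mathcal H^{n-k}\lr{\{X=y\}\cap B}\,dy$, and let $\ep\downarrow 0$. Hypothesis (3) forces $\a$ to be a regular value of $X$ almost surely (Bulinskaya's lemma), so the right-hand side converges to $\mathcal H^{n-k}\lr{\{X=\a\}\cap B}$; taking expectations on the left, Fubini and conditioning on $X(u)$ turn it into $\int_B\E{\sqrt{\det\lr{dX(u)dX(u)^*}}\mid X(u)=\a}\,\Den_{X(u)}(\a)\,du$. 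Hypothesis (1) makes $\{X=\a\}$ almost surely a $C^2$ submanifold of dimension $n-k$ and justifies the dominated convergence, while hypotheses (2)--(3) make the conditional expectation and the density genuine finite objects. This is the level-set Rice formula, \cite[Thm.~6.2, 6.3]{AW}.

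For general $m$ with $k<n$, raise $\mathcal H^{n-k}\lr{\{X=\a\}\cap B}$ to the $m$-th power, note that the diagonals $\{u_i=u_j\}$ carry no mass for the product measure $\lr{\mathcal H^{n-k}|_{\{X=\a\}}}^{\otimes m}$, and run the same regularization on $B^m\subset(\R^n)^m$, conditioning now on $X(u_1)=\dots=X(u_m)=\a$ simultaneously; the limit is $\int_{B^m}Y_{X,m}(u_1,\dots,u_m)\,\Den_{X(u_1),\dots,X(u_m)}(\a,\dots,\a)\,du$. The step that really needs hypothesis (2) is integrability near the diagonals: $\Den_{X(u_i),X(u_j)}(\a,\a)$ blows up like $\norm{u_i-u_j}^{-k}$ as $u_i\to u_j$, which is Lebesgue integrable on $\R^n$ exactly because $k<n$, while non-degeneracy of $\lr{X(u_j)}_{j=1}^m$ on distinct points keeps the integrand controlled away from the diagonal; hence the formula holds as an honest integral over all of $B^m$ (\cite[Prop.~6.5, 6.12]{AW}).

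Finally, for $k=n$, the set $\{X=\a\}$ is almost surely finite, $\mathcal H^0$ is the counting measure, $\left[\det\lr{dX^*dX}\right]^{1/2}=\abs{\det dX}$, and no separate $1$-jet hypothesis is imposed. Running the same argument on $B^m$, the off-diagonal part of $(\{X=\a\}\cap B)^m$ is exactly the ordered $m$-tuples of distinct zeros; the diagonals are Lebesgue-null in $B^m$ and across them the vanishing of $Y_{X,m}$ offsets the blow-up of $\Den_{X(u_1),\dots,X(u_m)}$, so the $B^m$-integral is finite and computes the expected number of such tuples, i.e.\ $\E{\prod_{j=0}^{m-1}(N-j)}$ with $N:=\#\lr{\{X=\a\}\cap B}$, rather than $\E{N^m}$. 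I expect the real work to lie not in these ideas but in the matching: the theorems cited from \cite{AW} are often stated under $C^1$-plus-H\"older regularity or with locally uniform non-degeneracy, so one must verify that the clean hypotheses (1)--(3) here — $C^2$ regularity together with pointwise non-degeneracy of the values and of the $1$-jets — really imply what those statements require, with no hidden uniformity. That bookkeeping, not a new estimate, is the main obstacle.
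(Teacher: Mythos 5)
The paper offers no proof of this statement at all: the theorem is presented as ``an amalgam of Aza\"is--Wschebor [Thms.~6.2, 6.3, Prop.~6.5, 6.12]'' and the reader is simply referred to \cite{AW}. Your plan --- reduce to the matching statements in \cite{AW} after checking that hypotheses (1)--(3) suffice --- is therefore the same strategy, and you have correctly identified the three regimes that are being stitched together (one-point, multi-point level set, multi-point $k=n$). The coarea-plus-mollification sketch you give is indeed the argument underlying those \cite{AW} results, so the general shape is sound.

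One correction worth making is in how you frame the role of hypothesis (2) and the diagonal estimate. First, the rate $\Den_{X(u_i),X(u_j)}(\a,\a)\sim\norm{u_i-u_j}^{-k}$ as $u_j\to u_i$ is not a consequence of hypothesis (2): non-degeneracy at \emph{distinct} points says nothing about how fast the joint covariance degenerates as the points coalesce. That rate is driven by the non-degeneracy of the $1$-jet, i.e.\ hypothesis (3) (and, in the $k=n$ case, by the non-degeneracy of the value/derivative pair, which is built into the formulation of \cite[Thm.~6.3]{AW}). More importantly, integrability over $B^m$ is not required for \eqref{E:KR} to hold: the identity is an equality of quantities that may simultaneously be $+\infty$, as Remark~\ref{R:KR Extension} in the paper emphasizes. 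The actual purpose of hypothesis (2) is simply that the joint density $\Den_{X(u_1),\dots,X(u_m)}$ exists at every off-diagonal $m$-tuple, so that the conditional-expectation/density factorization of the integrand on $B^m$ is well defined pointwise; the diagonals are then Lebesgue-null and can be discarded. The finiteness question you raise is real, but it belongs downstream --- it is exactly what the paper handles separately in \S\ref{S:critsvariance} (Propositions~\ref{P:L1Density} and~\ref{P: bound of numerator}) to get finite second moments for critical points --- not in the Kac--Rice theorem itself.
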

\begin{remark}\label{R:KR Extension} 
The equality \eqref{E:KR} is valid even if one side of it (and hence
the other) is infinite. Moreover, let $W:\R^\alpha\gives \R^\beta$ be
a continuous Gaussian field such that   
$(X,W)$ is Gaussian and suppose $f:\R^n\x C^0(\R^\alpha,
\R^\beta)\gives \R$ is a positive measurable  
function that is continuous when $C^0(\R^\alpha, \R^\beta)$ is
equipped with with topology of uniform convergence on compact
sets. Then, the formula \eqref{E:KR} is valid with $\mathcal H^{n-k}\lr{\set{X=\a}\cap B}$ replaced by 
\[\int_{\set{X=\a}\cap B} f(u,W(u)) d\mathcal H^{n-k}|_{\{X=\a\}}(u),\] 
and $ Y_{_{\!m,X}}\lr{u_1,\ldots, u_m}$ replaced by $Y_{_{\!m,X,f}}\lr{u_1,\ldots,
  u_m},$ defined as
\begin{equation}\label{E:KRmod}
\E{\prod_{j=1}^m f\lr{u_j,\, W(u_j)}\left[\det\lr{dX(u_j)^* dX(u_j)}\right]^{1/2}\,\big|\, X(u_j)=\a,\, j=1,\ldots,m}.
\end{equation}
This statement when $f$ is bounded is a special case of
\cite[Thm. 6.10]{AW}. It can be extended to positive $f$ by
considering the truncations $f_N:=\max\lr{f,N},\, N\in \mathbb N,$ and using the monotone convergence theorem. 
\end{remark}

\subsection{Kac-Rice Hypotheses for Zeros}\label{S:KR Zeros} In this section, we prove that $\phi_\infty$ satisfies the hypotheses of the Kac-Rice Theorem. Since $\phi_\infty$ is almost surely smooth, Hypothesis (1) is satisfied. Hypothesis (2) requires that the distribution
of 
\[\text{ev}(\phi_\infty; u_1,\ldots, u_n): = \lr{\phi_\infty(u_1),\ldots, \phi_\infty(u_m)}\]
{be} non-degenerate. Note that { we have the following equality in law on the space of continuous functions:}
\begin{equation}\label{E:KLExpansion}
\phi_\infty(u) \stackrel{d}{=}\int_{S^{n-1}} \cos(\inprod{u}{\w})
\lr{\sum_{j=1}^\infty a_j \psi_j(\w)}d\w,
\end{equation}
where $\sum_{j=1}^\infty a_j \psi_j(\w)$ is a white noise based in
$L^2(S^{n-1},\R)$ (i.e. $a_j$ are i.i.d. standard Gaussians and $\set{\psi_j}$ is an
orthonormal basis for $L^2(S^{n-1}, \R)$). Let us write
\[V := \mathcal F^{-1}(L^2(S^{n-1},
\R))= \left\{ f(u)=\int_{S^{n-1}}\cos(\inprod{\w}{u})
  g(\w)d\w:\; \; g\in L^2(S^{n-1},\R) \right\}\]
for the real-valued functions on $\R^n$ with frequency $1.$ Since $\text{ev}\lr{\cdot\; ; u_1,\ldots, u_m}$ is
linear and the law of $\phi_\infty$ is a non-degenerate Gaussian
measure on $V$, it is enough to show
that $\text{ev}(\cdot\; ; u_1,\ldots, u_m)$ is surjective as a function
on $V$ for every fixed collection of $m\geq 1$ distinct points
$\set{u_\ell}_{\ell=1}^m.$ The surjectivity of the linear functional $\phi \mapsto
\text{ev}(\phi ; u_1,\ldots, u_m)$ is equivalent to the linear
independence of its components 
\begin{equation}\label{E:phi}
\phi\mapsto \phi(u_\ell)=
\int_{S^{n-1}}\cos(\inprod{u_\ell}{\w})\widehat{\phi}(\w) d\w. 
\end{equation}
Since $L^2(S^{n-1},\R)$ separates points, this is implied by taking
the real part of the following result. 

\begin{proposition}[Non-degeneracy for zero sets]\label{P:Surjective zeros}
Fix $n\geq 2$ and $m\geq 1.$ Let $u_1,\ldots ,u_m\in \R^n$ be distinct. Then, the functions
\[\{e^{i\inprod{u_\ell }{\w}}:\;1\leq \ell \leq m\}\]
are linearly independent on $S^{n-1}$. 
\end{proposition}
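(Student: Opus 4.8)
Suppose $\sum_{\ell=1}^m c_\ell e^{i\inprod{u_\ell}{\w}}=0$ for all $\w\in S^{n-1}$; the goal is to deduce $c_1=\cdots=c_m=0$. The plan is to restrict this identity to the great circles $\Sigma\cap S^{n-1}$, where $\Sigma$ ranges over the two-dimensional subspaces of $\R^n$ through the origin, to convert each restriction into an infinite family of linear relations among the $c_\ell$ indexed by a Fourier mode $k$ via the Jacobi--Anger expansion $e^{iz\cos\theta}=\sum_{k\in\Z}i^k J_k(z)e^{ik\theta}$, and then to let $k\to\infty$ using the Bessel asymptotics $J_k(z)=\tfrac{(z/2)^k}{k!}\lr{1+O_z(1/k)}$ as $k\to+\infty$.

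Fix a two-plane $\Sigma$ and an orthonormal basis of it, so $\Sigma\cap S^{n-1}$ is parametrized by $\theta\mapsto\w(\theta)$. For $\w\in\Sigma$ one has $\inprod{u_\ell}{\w}=\inprod{P_\Sigma u_\ell}{\w}$, where $P_\Sigma$ is the orthogonal projection onto $\Sigma$; writing $(r_\ell,\phi_\ell)$ for the polar coordinates of $P_\Sigma u_\ell$ in $\Sigma$ gives $\inprod{u_\ell}{\w(\theta)}=r_\ell\cos(\theta-\phi_\ell)$. Jacobi--Anger then turns the hypothesis restricted to this circle into $\sum_k\lr{\sum_\ell c_\ell i^k J_k(r_\ell)e^{-ik\phi_\ell}}e^{ik\theta}\equiv 0$, whence every Fourier coefficient vanishes:
\[\sum_{\ell=1}^m c_\ell\,J_k(r_\ell)\,e^{-ik\phi_\ell}=0,\qquad k\in\Z.\]
Next I would order the distinct values taken by $\norm{P_\Sigma u_\ell}$ and argue by decreasing magnitude. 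Let $\rho$ be the largest such value. Multiplying the $k$-th relation by $k!\,2^k\rho^{-k}$ and using the Bessel asymptotics, uniform over the finite radius set, shows $\sum_{\ell:\,r_\ell=\rho}c_\ell\,e^{-ik\phi_\ell}\to 0$ as $k\to\infty$, the indices with $r_\ell<\rho$ contributing $o(1)$. Grouping the indices with $r_\ell=\rho$ by the value $v=P_\Sigma u_\ell$ (within a group the $\phi_\ell$ agree, and distinct values $v$ of equal modulus have distinct arguments), an elementary Cesàro-averaging (Weyl sum) lemma --- if $d_1,\dots,d_p\in\mathbb C$ and $\beta_1,\dots,\beta_p$ are distinct mod $2\pi$ with $\sum_j d_j e^{-ik\beta_j}\to 0$ then all $d_j=0$ --- yields $\sum_{\ell:\,P_\Sigma u_\ell=v}c_\ell=0$ for every $v\in\Sigma$ with $\norm v=\rho$. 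The key structural point is that for such a group of indices all the factors $J_k(r_\ell)e^{-ik\phi_\ell}$ coincide, so once its coefficient sum vanishes the whole group drops identically out of every relation above; what remains is a relation of the same form over the indices with $\norm{P_\Sigma u_\ell}<\rho$, and the argument repeats. After finitely many steps one obtains
\[\sum_{\ell:\,P_\Sigma u_\ell=v}c_\ell=0\qquad\text{for every two-plane }\Sigma\subset\R^n\text{ and every }v\in\Sigma.\]

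It then remains to choose, for each fixed $\ell_0$, a two-plane $\Sigma$ with $P_\Sigma u_\ell\neq P_\Sigma u_{\ell_0}$ for all $\ell\neq\ell_0$; the displayed identity applied with $v=P_\Sigma u_{\ell_0}$ then forces $c_{\ell_0}=0$, and letting $\ell_0$ vary finishes the proof. Now $P_\Sigma u_\ell=P_\Sigma u_{\ell_0}$ precisely when the nonzero vector $u_\ell-u_{\ell_0}$ lies in $\Sigma^\perp$, an $(n-2)$-plane. When $n=2$ we have $\Sigma^\perp=\{0\}$ so there is nothing to arrange (indeed $n=2$ is simply the base case: $P_\Sigma=\mathrm{id}$, the $u_\ell$ are already distinct, and the previous paragraph directly gives all $c_\ell=0$). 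When $n\geq 3$, the set of $(n-2)$-planes containing a prescribed nonzero vector is a proper closed subvariety of the Grassmannian $\mathrm{Gr}(n-2,n)$, which is irreducible of positive dimension $2(n-2)$, so a finite union of such subvarieties cannot exhaust it and a suitable $\Sigma^\perp$, hence $\Sigma$, exists.

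The step I expect to be the main obstacle is the passage $k\to\infty$ in the second paragraph: keeping the Bessel asymptotics uniform over the finite radius set, running the induction over the distinct values of $\norm{P_\Sigma u_\ell}$ cleanly (exploiting that entire ``equal-projection'' groups disappear from the relations once their coefficient sums vanish), and justifying the Cesàro-averaging lemma. It is worth noting that $n\geq 2$ is genuinely needed, since the argument uses that $S^{n-1}$ contains a full circle; the statement is false for $n=1$, as $e^{i\w}+e^{-i\w}-2\cos 1$ vanishes on $S^0=\{\pm1\}$ with coefficients not all zero.
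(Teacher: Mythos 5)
Your argument is correct, but it follows a genuinely different route from the paper. The paper first translates (multiplies by $e^{-i\inprod{u}{\w}}$) so that the moduli $\abs{u_\ell}$ become positive and pairwise distinct, then expands directly in $n$ dimensions via the plane--wave/zonal-harmonic formula \eqref{E:PWE}, pairs against $Z_k(\widehat{u}_{\ell^*},\cdot)$, and sends the zonal degree $k\to\infty$ using the normalized Bessel asymptotic $j_\nu(t)\to 1$ to isolate $a_{\ell^*}$. You instead restrict to great circles $\Sigma\cap S^{n-1}$, use the Jacobi--Anger expansion (a one-dimensional Fourier computation), and rather than normalizing the moduli to be distinct you handle coincidences in $\norm{P_\Sigma u_\ell}$ and in $P_\Sigma u_\ell$ itself with a Ces\`aro-averaging lemma, finishing with a Grassmannian genericity argument to choose a plane whose projection separates $u_{\ell_0}$ from the other points. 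Both proofs hinge on the same two ingredients — Bessel large-order asymptotics and a mechanism to isolate the term with the largest radius — but the paper's translation trick makes the extraction a one-liner, whereas your approach trades that global normalization for the Ces\`aro lemma plus genericity, which is more elementary in its tools (no zonal harmonics in dimension $n$, only classical $J_k$) and has the conceptual advantage of making the role of dimension explicit: everything reduces to $S^1$, the case $n=2$ is the base case, and higher dimensions enter only through the existence of enough $2$-planes.

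Two small points worth being precise about when you write this up. First, the base case $k=0$ of your induction over decreasing radii (needed to kill coefficients $c_\ell$ with $P_\Sigma u_\ell=0$) uses $J_0(0)=1$ and $J_k(0)=0$ for $k\geq 1$; it is worth saying so explicitly, since the normalization $k!\,2^k\rho^{-k}$ is meaningless at $\rho=0$. Second, in the Ces\`aro lemma the indexing variable $k$ should range over the positive integers and you use boundedness of $e^{ik\beta_{j_0}}$ together with the elementary fact that Ces\`aro averages of a null sequence are null; as stated the lemma is correct, but it is the crux of the separation argument and deserves its short proof inline.
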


\begin{proof}
Suppose
 \begin{equation}\label{E:LD}
\sum_{\ell=1}^m a_\ell \; e^{i\inprod{u_\ell}{\w}} \equiv 0,\qquad a_\ell \in\R.
\end{equation}
By multiplying by $e^{-i\inprod{u}{\w}}$ for an appropriate $u\in \R^n$
we may assume that the values $\abs{u_\ell}$ are positive and distinct. Recall the plane wave expansion (see e.g. \cite[Thm. 2]{BDS})
\begin{equation}\label{E:PWE}
  e^{i\inprod{u}{\w}} = \sum_{k= 0}^\infty C_k \lr{\frac{i\,|u|}{2}}^k
  j_{k+\alpha}(|u|)Z_k(\widehat u, \w),
\end{equation}
where 
\[\alpha = \frac{n-2}{2},\quad C_k = c_k\cdot d_k,\quad c_k= \frac{\Gamma(\alpha
  +1)}{\Gamma(\alpha + k +1)},\quad \widehat{u}=\frac{u}{\abs{u}},\]
$d_k$ is the dimension of the space of spherical harmonics of degree $k$, 
the functions $j_\nu$ are normalized Bessel functions
\[j_\nu(t) = \Gamma\lr{\nu+1} \lr{\frac{t}{2}}^{-\nu} J_\nu(t)\]
solving $y'' + \frac{2\nu + 1}{t} y' + y =0$ with $y(0)=1$, and
$Z_k(\widehat{u}, \w)$ are the zonal harmonics of degree $k$
normalized by $\norm{Z_k(\widehat{u},\cdot)}_{L^\infty}=1$ for each
$\widehat{u},k.$ The normalization of $Z_k$ implies
\[\int_{S^{n-1}} Z_k(\w_0, \w)Z_k(\w_1, \w)d\w = d_k^{-1}Z_k(\w_0, \w_1),\]
for all $\w_0,\w_1 \in S^{n-1}$.
 Substituting
\eqref{E:PWE} into \eqref{E:LD}, we have
\[\sum_{\ell=1}^m \sum_{k=0}^{\infty} C_k  a_\ell\lr{\frac{i\abs{u_\ell}}{2}}^k j_{k+\alpha}(\abs{u_\ell})
Z_k\lr{\widehat{u}_\ell,\, \w} \equiv 0.\]
For each $\widehat{y}\in S^{n-1}$ and $k\geq 0$ we integrate against
$Z_{k}(\widehat{y}, \w)$ to find 
\begin{equation}\label{E:LD2}
\sum_{\ell=1}^ma_\ell c_k\lr{\frac{i\abs{u_\ell}}{2}}^{k} j_{k+\alpha}(\abs{u_\ell})
Z_{k}(\widehat{u_\ell}, \widehat{y})=0,\qquad \forall\;  k\geq 0,\,\,\;
\widehat{y}\in S^{n-1}.
\end{equation}
Let $\ell^* = \text{argmax}\set{\abs{u_\ell}~:~1\leq \ell \leq m},$
and recall that for $t\geq 0$ fixed 
\begin{equation}\label{E:BesselAsymptotics}
j_\nu(t) = 1 + o(1),\qquad \text{as  }\nu\gives \infty.
\end{equation}
Keeping in mind the normalization $\norm{Z_k(\widehat{u},\cdot)}_{L^\infty}=
Z_k(\widehat{u}, \widehat{u})=1$, we divide \eqref{E:LD2} by $c_k\lr{i
  \abs{u_{\ell^*}}/2}^k$, set $\widehat{y} = \widehat{u}_{\ell^*},$
use \eqref{E:BesselAsymptotics}, and send $k\gives \infty$ to conclude $a_{\ell^*}=0.$
Repeating this for the $m-1$ remaining points completes the proof. 
\end{proof}

It remains to check that $\phi_\infty$ satisfies Hypothesis (3) in Theorem \ref{T:KR}. Since the law of $\phi_\infty$ is translation-invariant, it is enough to show that $(\phi_\infty(0), \dell_1 \phi_\infty(0),\ldots, \dell_n \phi_\infty(0))$ is a non-degenerate Gaussian vector. Just as with the discussion before Proposition \ref{P:Surjective zeros}, but using the maps $ \phi \mapsto \partial_i \phi(0)=\int_{S^{n-1}}\w_i\, \widehat{\phi}(\w) d\w $, this is equivalent to the statement that the restrictions $\set{1, \w_1,\ldots, \w_n}$ of $1$ and the $n$ coordinate functions are linearly independent functions on $S^{n-1}.$ This is true since the zero set of an affine function is affine, and hence if it contains the unit sphere, then it must be $\R^n.$

\subsection{Kac-Rice Hypotheses for Critical Points}\label{S:KR crits}
We continue to write $\phi_\infty$ for a frequency $1$ random wave on $\R^n.$ The purpose of this section is to  check that the hypotheses of the Kac-Rice formula are satisfied by the Gaussian field $d\phi_\infty = \lr{\dell_1\phi_\infty,\ldots,  \dell_n\phi_\infty}.$ As in Section \ref{S:KR Zeros}, $d\phi_\infty$ is almost surely smooth and hence satisfies Hypothesis (1). Also, since $d\phi_\infty$ takes values in $\R^n$, we do not need to check Hypothesis (3) in the statement of the Kac-Rice theorem. It therefore remains to check Hypothesis (2). We must show that for any distinct $u_1,\ldots, u_m\in\R^n$ the vector $(\dell_i \phi_\infty(u_\ell))_{1\leq i \leq n,\,\,  1\leq \ell \leq m}$ has a non-degenerate distribution. By the same reasoning as preceded Proposition \ref{P:Surjective zeros}, but using the maps
\[\phi\mapsto  \partial_i\phi(u_\ell)=
\int_{S^{n-1}} \w_i\, \sin(\inprod{u_\ell}{\w})\widehat{\phi}(\w) d\w\]
  instead of \eqref{E:phi}, the non-degeneracy of $(\dell_i \phi_\infty(u_\ell))_{1\leq i \leq n,\,\,
  1\leq \ell \leq m}$ is
implied by the following result.

\begin{proposition}[Non-degeneracy for critical sets]\label{P:Surjective crits}
Let $u_1,\ldots ,u_m\in \R^n$ be $m$ distinct points. Then, the functions
\[\{\w_k\, e^{i\inprod{u_\ell}{\w}}:\;1\leq \ell \leq m,~~1\leq k\leq n\}\]
are linearly independent on $S^{n-1}$. 
\end{proposition}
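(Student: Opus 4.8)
The strategy is to mimic the proof of Proposition \ref{P:Surjective zeros}, but with an extra bookkeeping layer to handle the vector-valued multiplier $\w_k$. Suppose that $\sum_{\ell=1}^m \sum_{k=1}^n a_{\ell,k}\,\w_k\, e^{i\inprod{u_\ell}{\w}}\equiv 0$ on $S^{n-1}$ with real constants $a_{\ell,k}$. Writing $v_\ell=(a_{\ell,1},\dots,a_{\ell,n})\in\R^n$, this says $\sum_{\ell=1}^m \inprod{v_\ell}{\w}\, e^{i\inprod{u_\ell}{\w}}\equiv 0$ on $S^{n-1}$; our goal is to deduce $v_\ell=0$ for every $\ell$. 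The first reduction is the same as before: multiplying by $e^{-i\inprod{u}{\w}}$ for a generic $u$ shifts all the $u_\ell$ and does not change the linear span of the summands, so we may assume the norms $\abs{u_\ell}$ are positive and pairwise distinct.

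Next I would feed in the plane wave expansion \eqref{E:PWE} for each $e^{i\inprod{u_\ell}{\w}}$, obtaining
\[
\sum_{\ell=1}^m \sum_{k=0}^\infty C_k\Big(\tfrac{i\abs{u_\ell}}{2}\Big)^k j_{k+\alpha}(\abs{u_\ell})\,\inprod{v_\ell}{\w}\,Z_k(\widehat u_\ell,\w)\equiv 0.
\]
The product $\inprod{v_\ell}{\w}\,Z_k(\widehat u_\ell,\w)$ is a polynomial in $\w$ of degree $k+1$, hence a spherical harmonic of degree $\le k+1$; in fact $\w\mapsto\inprod{v_\ell}{\w}$ is a degree-$1$ harmonic and $Z_k(\widehat u_\ell,\cdot)$ has degree $k$, so the product decomposes into harmonics of degrees $k+1$ and $k-1$ only. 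To extract one term I would pair the identity with the zonal harmonic $Z_{N}(\widehat y,\cdot)$ for a large degree $N$ and $\widehat y\in S^{n-1}$ to be chosen; only the $k=N-1$ and $k=N+1$ summands survive (the degree-$(k+1)$ part of the $k=N-1$ term and the degree-$(k-1)$ part of the $k=N+1$ term). Using $\int_{S^{n-1}} P\,Z_N(\widehat y,\cdot)\,d\w = $ (the degree-$N$ component of $P$ evaluated at $\widehat y$, up to the known normalization constant $d_N^{-1}$), this yields a relation of the schematic form
\[
\sum_{\ell=1}^m\Big[ A_{N-1}(\abs{u_\ell})\,\pi_N\big(\inprod{v_\ell}{\cdot}Z_{N-1}(\widehat u_\ell,\cdot)\big)(\widehat y) + A_{N+1}(\abs{u_\ell})\,\pi_N\big(\inprod{v_\ell}{\cdot}Z_{N+1}(\widehat u_\ell,\cdot)\big)(\widehat y)\Big]=0
\]
for all $N$ and all $\widehat y$, where $A_k(r)=C_k (ir/2)^k j_{k+\alpha}(r)$ and $\pi_N$ denotes projection onto degree-$N$ harmonics.

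The endgame is again an asymptotic-dominance argument in $N$. I would specialize $\widehat y=\widehat u_{\ell^*}$ for $\ell^*=\mathrm{argmax}\,\abs{u_\ell}$ and divide by the size of the corresponding coefficient $A_{N\pm1}(\abs{u_{\ell^*}})$; since $j_\nu(t)\to1$ as $\nu\to\infty$ for fixed $t$ by \eqref{E:BesselAsymptotics}, and $\abs{u_{\ell^*}}>\abs{u_\ell}$ for $\ell\ne\ell^*$, every term with $\ell\ne\ell^*$ is killed in the limit $N\to\infty$, leaving a statement purely about $v_{\ell^*}$ — namely that the degree-$N$ projection of $\inprod{v_{\ell^*}}{\cdot}Z_{N\mp1}(\widehat u_{\ell^*},\cdot)$, evaluated at $\widehat u_{\ell^*}$, tends to $0$ after normalization. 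Computing this projection explicitly (using that $Z_N(\widehat u,\widehat u)=1$ and the recursion/derivative formulas relating $\inprod{v}{\w}Z_{N-1}(\widehat u,\w)$ to $Z_N(\widehat u,\w)$ and $Z_{N-2}(\widehat u,\w)$, equivalently the Gegenbauer three-term recurrence) shows the normalized quantity converges to a nonzero multiple of $\inprod{v_{\ell^*}}{\widehat u_{\ell^*}}$, forcing $\inprod{v_{\ell^*}}{\widehat u_{\ell^*}}=0$. This handles only the radial component of $v_{\ell^*}$, so the remaining step — which I expect to be the main obstacle — is to recover the full vector $v_{\ell^*}$: one repeats the argument with $\widehat y$ ranging over a neighborhood of $\widehat u_{\ell^*}$ on the sphere (or differentiates the limiting identity in $\widehat y$), using that the family $\{\pi_N(\inprod{v}{\cdot}Z_{N\mp1}(\widehat u,\cdot))(\widehat y)\}$ as $\widehat y$ varies detects all of $v$, not just $\inprod{v}{\widehat u}$; concretely, the degree-$N$ harmonic $\pi_N(\inprod{v}{\cdot}Z_{N-1}(\widehat u,\cdot))$ is, up to scalars, $\partial$ in the direction $v$ of the zonal harmonic, whose nonvanishing as a function of the pole forces $v_{\ell^*}=0$. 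Once $v_{\ell^*}=0$ we drop the point $u_{\ell^*}$ and induct on $m$, exactly as in Proposition \ref{P:Surjective zeros}. The one genuinely new ingredient compared to the zeros case is the harmonic-analytic identification of $\pi_N(\inprod{v}{\cdot}Z_{N\mp1}(\widehat u,\cdot))$ as a directional derivative of a zonal harmonic, so I would isolate that as a short lemma (valid on $S^{n-1}$ for all $n\ge2$) before running the limit argument.
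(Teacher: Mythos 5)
Your overall route — plane-wave expansion, pairing against a high-degree zonal harmonic, and an asymptotic-dominance argument in the degree that isolates the point of largest norm — is the same as the paper's, though you pass through a different intermediate identity (projecting the product $\inprod{v_\ell}{\w}Z_k(\widehat u_\ell,\w)$ onto its degree-$N$ part, rather than applying the formula \eqref{E:int} for $\int_{S^{n-1}} e^{i\inprod{u}{\eta}}\w_k Q_d(\eta)\,d\eta$ to a zonal $Q_d$). Both would in principle land on the same object. There is, however, a genuine gap exactly where you flag ``the main obstacle.''

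The paper never has to deduce all of $v_{\ell^*}$ from the single scalar $\inprod{v_{\ell^*}}{\widehat u_{\ell^*}}=0$, because it arranges the reduction differently at the start: after multiplying by $e^{i\inprod{u}{\w}}$ for a generic $u$, one may assume not only that the $\abs{u_\ell}$ are positive and distinct, but also (see \eqref{E:non-degen}) that $v_\ell\neq 0$ implies $\inprod{v_\ell}{u_\ell}\neq 0$, and moreover that the $\widehat u_\ell$ are distinct and non-antipodal. The shift replaces $u_\ell\mapsto u_\ell+u$ without touching $v_\ell$, so $\inprod{v_\ell}{u_\ell+u}\neq 0$ for all $\ell$ with $v_\ell\neq 0$ is a generic open condition on $u$, achievable simultaneously with the norm conditions. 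With that in hand, $\inprod{v_{\ell^*}}{\widehat u_{\ell^*}}=0$ immediately gives $v_{\ell^*}=0$ and the induction closes. Your proposal omits this preparation, and the substitute you sketch — running the limit with $\widehat y$ varying near $\widehat u_{\ell^*}$, or differentiating the limiting identity in $\widehat y$ — does not straightforwardly work: for any fixed $\widehat y\neq\widehat u_{\ell^*}$ the degree-$N$ spherical harmonics on both sides decay as $N\to\infty$, so after the $|u_{\ell^*}|^{N-1}$ normalization the limiting statement is the trivial $0=0$; and differentiating in $\widehat y$ at the pole brings in tangential gradients of zonal harmonics, which are $O(N)$, so a different normalization is needed and the competition between $\ell$'s has to be re-examined. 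The paper sidesteps all of this: after expanding $\partial_{x_k}Q_d$ into a radial piece (proportional to $\inprod{v_\ell}{u_\ell}Q_d(\widehat u_\ell)$) and a tangential piece, the tangential piece at $\ell=\ell^*$ vanishes identically because a zonal harmonic has a critical point at its own pole, and the radial piece is handled by the non-degeneracy normalization. That is the ingredient your sketch is missing.

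Two further points. First, your concentration argument relies on $Z_d(\widehat u,\widehat y)\to\delta$-like behavior as $d\to\infty$, which is only valid for $n\geq 3$; the paper gives a separate Fourier-series argument for $n=2$, which your proposal does not address. Second, after the shift you should also record that the $\widehat u_\ell$ are pairwise distinct and non-antipodal, since this is what makes $Z_d(\widehat u_{\ell^*},\widehat u_\ell)\to 0$ for $\ell\neq\ell^*$.
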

\begin{proof}
Suppose 
\begin{equation}\label{E:SurjGoal}
 \sum_{\ell=1}^m \sum_{k=1}^n a_{\ell,k}\;\w_k e^{i\inprod{u_\ell}{\w}} \equiv 0,\qquad
 \w \in S^{n-1}
\end{equation}
with $\vec{a_\ell}:=\lr{a_{\ell,k}}_{k=1}^n $ not all zero. We begin by considering $n\geq 3.$ In this case, we use that the degree $k$ zonal harmonic
$Z_k(\w, \cdot )$ is highly peaked at $\w$ (see
\eqref{E:Zonal-Props}). After mutiplying \eqref{E:SurjGoal} by
$e^{i\inprod{u}{\w}}$ for an appropriately chosen $u,$ we may assume 
\begin{equation}\label{E:non-degen}
\vec{a}_\ell\neq 0 \quad \tf \quad  \inprod{\vec{a}_\ell}{u_\ell} \neq 0,\qquad\forall \ell
\end{equation}
and that the points $\widehat{u}_\ell= u_\ell/|u_\ell|\in S^{n-1}$ are not
antipodal and are distinct. Let $Q_d$ be a degree $d\neq 0$ harmonic homogeneous polynomial on
$\R^n.$ We have 
\begin{equation}\label{E:int0}
\sum_{k=1}^n \sum_{\ell=1}^m a_{\ell,k} \int_{S^{n-1}}
e^{i\inprod{u_\ell}{\w}}\w_k Q_d(\w) d\w = 0.
\end{equation}
If $P$ is a homogeneous polynomial on $\R^n$ with degree $D$, then
(see \cite[Thm. 3]{BDS})
\begin{equation}\label{E:int}
\int_{S^{n-1}}e^{i\inprod{u}{\eta}}P(\eta)d\eta = \sum_{j=0}^{[D
  /2]} \kappa_{j,D} \,\, j_{\alpha + D -j }(\abs{u})\lr{\Delta^j P}(u),
  \end{equation}
where
$\kappa_{j,D} = \lr{\frac{i}{2}}^{D} \frac{(-1)^j
  \Gamma(\alpha +1)}{j!\Gamma(\alpha + D+ 1 - j)}$ and $j_\nu$ are
normalized Bessel functions (see \eqref{E:PWE}). Note that 
\[\Delta^j (x_k Q_d(x)) =
\begin{cases}
  x_k Q_d(x) & j = 0,\\
  \partial_{x_k} Q_d (x) & j =1,\\
  0 & \text{otherwise}.
\end{cases}
\]
Hence, plugging \eqref{E:int} into \eqref{E:int0}, with $P=\w_k Q_d$ and $D=d+1$, we find that 
\begin{equation}\label{E:sum1}
 \sum_{k=1}^n\sum_{\ell=1}^ma_{\ell,k} \lr{\frac{j_{\alpha+
      d+1}(\abs{u_\ell})}{\Gamma(\alpha + d + 2)} u_{\ell,k}Q_d(u_\ell) - \frac{j_{\alpha + d }(\abs{u_\ell})}{\Gamma(\alpha + d+1)} \dell_{x_k}Q_d(u_\ell)} = 0.
  \end{equation}
Note that, since $Q_d$ is homogeneous of degree $d$,
\begin{align*}
{\partial_{x_k}} Q_d (x)
&=x_k \; d\; |x|^{d-2}  Q_d ( \widehat{x})+ |x|^d \nabla_{\pi_{\widehat{x}}(e_k)}^{S^{n-1}} Q_d ( \widehat{x}),
\end{align*}
where $e_k$ is the $k-$th unit vector, $\widehat x=x/|x|$ and $\pi_{\w}(\vec{v})$ denotes the projection of the vector $\vec{v}$ onto the tangent fiber
$T_{\w}S^{n-1}$, for $\w\in S^{n-1}$.  Hence, 
\begin{align*}
\sum_{k=1}^n a_{\ell,k} {\partial_{x_k}} Q_d (u_\ell) 
&= d |u_\ell|^{d-2} Q_d(\widehat{u}_\ell) \inprod{ \vec{ a_\ell}}{ u_\ell} + |u_\ell|^d \nabla_{\pi_{\widehat{u}_\ell}(\vec{a_\ell})}^{S^{n-1}} Q_d(\widehat{u}_\ell).
\end{align*}
Substituting this into \eqref{E:sum1} and setting 
\[f_d(x):=|x|^2  j_{\alpha + d+1}(\abs{x})\frac{\Gamma(\alpha + d+1)}{d\; \Gamma(\alpha + d+2 )} -  j_{\alpha + d}(\abs{x}),\]
we obtain 
\begin{equation}\label{E:Bessel11}
\sum_{\ell=1}^m \Big[f_d(u_\ell)
\inprod{\vec{a_\ell}}{u_\ell} \abs{u_\ell}^{d-2} Q_d(\widehat{u}_\ell)
- \frac{j_{\alpha + d }(\abs{u_\ell})}{d}\abs{u_\ell}^d \nabla_{\pi_{\widehat{u}_\ell}(\vec{a_\ell})}^{S^{n-1}}Q_d(\widehat{u}_\ell)\Big] = 0.
\end{equation}
Note that for $\abs{x}\neq 0,$ \eqref{E:BesselAsymptotics} implies
that $f_d(x)\gives -1$ as $d\gives \infty.$ Take $Q_d(\cdot)=Q_{d,\ell}(\cdot)=Z_{d}(\widehat{u}_\ell, \cdot)$ to be the degree $d$ zonal harmonic centered at $\widehat u_\ell.$ Note that since $n\geq 3$ and 
$\widehat{u}_\ell$'s are not antipodal for different $\ell$,
\begin{equation}\label{E:Zonal-Props}
\lim_{d\gives \infty} Q_{d,\ell }(\widehat{u}_{\ell'}) = \delta_{\ell,\ell'}\qquad
\text{and}\qquad \lim_{d\gives \infty} 
\nabla_{\pi_{\widehat{u}_\ell}(\vec{\alpha}_\ell)}^{S^{n-1}}Q_{d,\ell}(\widehat{u}_{\ell'})= 0\qquad \forall
\ell, \ell'.
\end{equation}
Let $\ell^*=\text{argmax}\set{\abs{u_\ell}~: 1\leq \ell \leq
  m}$. Dividing \eqref{E:Bessel11} by $|u_{\ell^*}|^{d-2}$ and taking
$d \to \infty$, we find
$\inprod{\vec{a_{\ell^*}}}{u_{\ell^*}}=0$. Using
\eqref{E:non-degen}, this shows $\vec{a_{\ell^*}}=0.$ Repeating this
argument for the $m-1$ remaining points completes the proof when $n\geq 3.$

If $n=2$ we cannot use the concentration of zonal harmonics. Instead, we
argue by an explicit Fourier series computation. Suppose again that \eqref{E:SurjGoal}
holds. Write $\w_1 = \cos(\theta),\,\,\w_2 = \sin(\theta).$ After multiplying \eqref{E:SurjGoal}
by $e^{i\inprod{u}{\w}},$ we may assume that the values  $|u_\ell|$ are distinct and that $\abs{u_\ell}>2$ for all
$\ell$. We can also assume that, with $\widehat{u}_\ell = e^{i\theta_\ell},$ the arguments
$\theta_\ell$ are not rational multiplies of $\pi.$ Using the plane wave
expansion \eqref{E:PWE}, we have for $\theta \in [0,2\pi]$ that
\begin{align}\label{E:fourier}
F(\theta):=\sum_{\ell=1}^m\sum_{k=0}^\infty \lr{\frac{i\,|u_\ell|}{2}}^k \frac{j_k(|u_\ell|)}{k!}\cos\lr{k\lr{\theta
  - \theta_\ell}} \lr{a_{\ell,1} \cos(\theta) + a_{\ell,2}
    \sin(\theta)}\equiv 0.
\end{align}
We have used that $c_k=1/k!$ when $n=2.$ By assumption, there exists a unique $\ell^* :=
\text{argmax}\set{\abs{u_\ell}~:~ 1\leq \ell \leq m}.$ Extracting the $N$-th
Fourier coefficients in \eqref{E:fourier}, multiplying them by $(N-1)! \cdot \lr{\frac{i\abs{u_{\ell^*}}}{2}}^{-N+1} $, and sending $N\gives\infty$, we find that 
\[\lim_{N\gives \infty}
\inprod{\vec{a}_{\ell^*}}{e^{i(N-1)\theta_{\ell^*}}}=\lim_{N\gives \infty}
\inprod{\vec{a}_{\ell^*}^{\perp}}{e^{i(N-1)\theta_{\ell^*}}}=0,\]
where for any $v=(v_1,v_2)$ we set $v^{\perp}=(v_2, -v_1).$
Hence, we must have that $\vec{a}_{\ell^*}=0.$ Repeating this argument for the
$m-1$ remaining points completes the proof. 
\end{proof}


\subsection{Finiteness of the second moment for critical
  points}\label{S:critsvariance} 
In this section we prove that if $R<\infty$ then  
\begin{equation}\label{E: bdd variance}
\E{(\#\{d\phi_\infty(u) = 0 ,\,\, \abs{u}\leq R \})^2} <\infty.
\end{equation}
{Here, and in what follows, {for a map $\psi:\R^n \to \R^k$} we write  
\[
d\psi:=(\dell_{u_1}\psi, \dots, \dell_{u_n}\psi)\qquad \text{and}\qquad  \nabla_{\w}d\psi=(\nabla_{\w}\dell_{u_1}\psi, \dots, \nabla_{\w}\dell_{u_n}\psi),
\]
where $\nabla_{\w}$ is the directional derivative in the direction of $\w\in \R^n$.}
The results in Section \ref{S:KR crits} show that we may apply the Kac-Rice
formula to the moments of the counting measure of
$(d\phi_\infty)^{-1}(0).$ Hence, \eqref{E: bdd variance} is equivalent to showing
\begin{equation}\label{E:integrable}
 Y_{_{\!2,d\phi_\infty}}(u,v)\Den_{(d\phi_\infty(u),d\phi_\infty(v))}(0,0) \in L^1_{\text {loc}}(\R^n \times \R^n),
 \end{equation}
where $ Y_{_{\!2,d\phi_\infty}}$ is as in \eqref{E:KR}. Note that the density
$\Den_{(d\phi_\infty(u), d\phi_\infty(v))}(0, 0)$ blows up at the
diagonal $u=v$, so \eqref{E:integrable} is not immediate. 
{
Instead, it follows from Proposition \ref{P:L1Density}, Proposition \ref{P: bound of numerator}, Lemma \ref{L:infty-cond}, and the fact that $|u-v|^{2-n}$ is in $L^1_{\text{loc}}$. We state Propositions \ref{P:L1Density}  and \ref{P: bound of numerator} below so that they can be applied to $\psi\in \set{\phi_\leb^x,d\phi_\leb^x,\phi_\infty,d\phi_\infty}.$ In this section we  only need the case $\psi=d\phi_\infty,$ but in the proof of Theorems \ref{T:Global}-\ref{T:LocalCrits} we will need the others as well.


\begin{proposition}\label{P:L1Density} Fix $k =1,\ldots, n.$ Suppose $\psi:B_R\gives \R^k$ is a smooth centered Gaussian field, where $B_R$ is a ball of radius $R\in (0,\infty]$ in $\R^n$. Suppose that for every $u\in B_R$, the joint distribution of the vector $\lr{\psi(u), \nabla_{\w}\psi(u)}$ is uniformly non-degenerate:
\begin{equation}\label{E:12-derivs}
    \inf_{u\in B_R}\inf_{\w\in S^{n-1}}\det\Cov \lr{\psi(u),\nabla_\w \psi(u)}=c>0,
\end{equation}
where $\nabla_\w$ is the directional derivative in the direction of $\w.$ 
 Then, as $|u-v| \to 0$,
\[\Den_{(\psi(u),\psi(v))}(0,0) =O\big(\abs{u-v}^{-k}\big),\]
where the implicit constant depends only on $c$, {$n$, and $k.$}
\end{proposition}

\begin{proposition}\label{P: bound of numerator}
Fix $k=1,\ldots, n.$ Suppose $\psi:B_R\gives \R^k$, { $\psi=(\psi_\alpha)_{\alpha=1}^k$,} is a smooth centered Gaussian field, where $B_R$ is a ball of radius $R\in (0,\infty]$ in $\R^n$ satisfying the following assumptions:
\begin{align*}
    (A1)&\quad\,\forall u,v\in B_R,\, u\neq v,\, \text{ the covariance matrix of }\lr{\psi(v),\psi(u)} \text{is invertible:}\\
    &\qquad \qquad \qquad \qquad \qquad\lr{\begin{array}{cc}
        F(v,v) & F(v,u) \\
        F(u,v) & F(u,u)
    \end{array}}^{-1}~~~\text{exists},\\
    &\quad \text{where }\lr{F(u,v)}_{\alpha,\beta}:=\Cov\lr{\psi_\alpha(u),\psi_\beta(v)},\quad \alpha, \beta=1, \dots, k.\\
    (A2)&\quad\text{The smallest eigenvalue for the covariance matrix of }\psi(v)\text{ given }\psi(u)=0\\
    &\quad\text{is bounded below by }\abs{u-v}^2.\text{ That is, }\exists c>0 \text{ such that }\forall \xi\in \R^k,\text{ with } \norm{\xi}=1,\\
     &\qquad\qquad\qquad \xi^T\lr{F(v,v)-F(v,u)F(u,u)^{-1}F(u,v)}\xi ~\geq~ c\abs{u-v}^2.\\
    (A3)
    &\quad \text{The entries of }F(u,v) \text{ are uniformly bounded in }C^m(B_R\x B_R)\text{ for all }m.
\end{align*}
Define $Y_{2,\psi}(u,v)$ as in \eqref{E:KR}. Then, there exists $C>0,$ depending only on $c$ and the $C^3$ norm of the entries of $F(u,v)$ on $B_R\x B_R$, so that, uniformly over $u\in B_R$, as $u\gives v$,
\begin{equation}\label{E: conditional exp 2}
  Y_{_{\!2,\psi}}(u,v)\leq \begin{cases}
    C\, \abs{u-v}^2 &\quad k=n,\\
    C\,,&\quad k < n.
  \end{cases}
\end{equation}
\end{proposition}

}

{

\begin{Lem}\label{L:infty-cond}
The fields $\phi_\infty$, $d\phi_\infty$ satisfy the hypotheses of Propositions \ref{P:L1Density} and \ref{P: bound of numerator} with $R=\infty.$
\end{Lem}
\begin{proof}
The proofs for $\phi_\infty,d\phi_\infty$ are  essentially the same, {so we only} provide details for the latter. The field $\phi_\infty$ is isotropic and translation invariant. Thus, $d\phi_\infty(u)$ is independent of $\nabla_\w d\phi_\infty(u)$ for every $u,\w$, and the distribution of both is independent of $u,\w.$ In particular, {writing $u=(u_1, \dots, u_n)$,}
\[\inf_{u\in B_R}\inf_{\w \in S^{n-1}}\det\Cov \lr{d\phi_\infty(u),\nabla_{\w}d\phi_\infty(u)}=\det \lr{\Cov\lr{d\phi_\infty(0)}}\det\lr{\Cov\lr{\partial_{u_1}d\phi_\infty(0)}}.\]
A direct computation using that $\int_{S^{n-1}}\w_i\w_j d\w=\frac{1}{n} \delta_{i,j}$ shows that
\[\Cov\lr{d\phi_\infty(0)}=\frac{1}{n}\mathrm{Id},\qquad 
\text{and} 
\qquad \Cov\lr{\partial_{u_1}d\phi_\infty(0)}=L,\]
where {$L$ is the diagonal matrix}
\begin{equation}\label{E:M-def}
     L=\mathrm{Diag}\lr{\kappa_4,\kappa_{2,2},\ldots, \kappa_{2,2}},\quad \kappa_4:=\int_{S^{n-1}}\w_1^4d\w,\quad \kappa_{2,2}:=\int_{S^{n-1}} \w_1^2\w_2^2d\w.
 \end{equation}
 Thus, $\phi_\infty$ satisfies the hypotheses of Proposition \ref{P:L1Density}. To check the hypotheses of Proposition \ref{P: bound of numerator},  note that the invertibility of the covariance matrix of $\lr{d\phi_\infty(v),d\phi_\infty(u)}$ is the content of Proposition \ref{P:Surjective crits}, verifying (A1) (see also the paragraph before the statement of Proposition \ref{P:Surjective crits}). Next, to check (A3), recall \eqref{E:LimitDef} and note that the $(\alpha,\beta)$ entry of $F(u,v)$ is
\[(2\pi)^{n/2}\partial_{u_\alpha} \partial_{v_\beta} \lr{J_{\frac{n-2}{2}}\lr{|u-v|}~/~|u-v|^{\frac{n-2}{2}}}.\]
This is a real analytic function that decays like $|u-v|^{-1/2}$ at infinity and hence has uniformly bounded derivatives of all orders for $u,v\in \R^n.$ Finally, we check (A2) by a direct computation. Since $\phi_\infty$ is isotropic and stationary, we may assume $u=\lr{u_1,0,\ldots,0}$ and $v=\lr{v_1,0,\ldots, 0}.$ For each $\alpha,\beta=1,\ldots, n$, {Taylor expanding the exponential below we have}
\begin{align*}
\lr{F(u,v)}_{\alpha,\beta}&=\partial_{u_\alpha}\partial_{u_\beta}\int_{S^{n-1}} e^{i\lr{u-v}\w}d\w =\int_{S^{n-1}}e^{i(u-v)\w} \w_\alpha\w_\beta d\w\\
&=\int_{S^{n-1}} \w_\alpha\w_\beta d\w - \int_{S^{n-1}}(u_1-v_1)^2\w_1^2 \w_\alpha\w_\beta d\w + O(\abs{u-v}^4).\end{align*}
Using that $\int_{S^{n-1}}\w_\alpha\w_\beta d\w=\frac{1}{n} \delta_{\alpha,\beta}$ yields
\begin{equation}\label{E:cov-inv}
   F(v,v)-F(u,v)F(u,v)^T=\abs{u-v}^{2}2 L\lr{\mathrm{Id}+O\lr{\abs{u-v}^2}},
 \end{equation}
where $L$ is as above. This completes the proof that (A2) holds and hence the proof of Lemma \ref{L:infty-cond} as well.
\end{proof}
}

\subsection{Proof of Proposition \ref{P:L1Density}} Writing $\Cov_{_{\!\psi}}(u,v):=\Cov(\psi(u), \psi(v)),$ we have
\[\Den_{(\psi(u),\psi(v))}(0,0)=\lr{2\pi}^{-n}\det (\Cov_{_{\!\psi}}(u,v))^{-1/2}.\] 
Proposition \ref{P:L1Density} is hence equivalent to proving that there exists $C$ such that for all $u \in \R^n$
\begin{equation}\label{E:L1Goal}
\frac{\sqrt{\det \Cov_{_{\!\psi}}(u,v)}}{\abs{u-v}^k}\geq C\qquad \text{as
}v\gives u.
\end{equation}
Write
\[\psi(u) = \sum_{j=1}^\infty a_j \psi_j(u) = \inprod{\vec{a}}{\Psi(u)}_{\ell_2},\qquad  \Psi(u)=\lr{\Psi_1(u),\ldots, \Psi_k(u)}\]
where $\Psi_i(u)=\lr{\psi_{i,j}(u)}_{j=1}^\infty$ is a vector of smooth  functions, whose existence is guaranteed by the Karhounen-Loeve Theorem. Note that 
\[\Cov_{_{\!\psi}}(u,v)=\text{Gram}\lr{\Psi_1(u),\ldots, \Psi_k(u),\, \Psi_1(v),\ldots, \Psi_k(v)},\]
where for any vectors $w_i$ in some inner product space $\text{Gram}(w_1,\ldots, w_\ell)=\lr{\inprod{w_i}{w_j}}_{1\leq
  i,j\leq \ell}$ is the Gram matrix. 
By the Gram Identity
\begin{align*}
\sqrt{\det  \Cov_{_{\!\psi}}(u, v)}=\norm{\Psi(u)\wedge \Psi(v)},\qquad \Psi(u):=\Psi_1(u)\wedge\cdots
\wedge \Psi_k(u)
\end{align*}
We have
\begin{align}
\frac{\sqrt{\det \Cov_{_{\!\psi}}(u, v)}}{\abs{u-v}^{k}}=\frac{\norm{\Psi(u)\wedge \Psi(v)}}{\abs{u-v}^{k}} 
=\norm{\Psi(u)\wedge \frac{\Psi(u)-\Psi(v)}{\abs{u-v}^k}}, \label{E: wedge}
\end{align}
where we have abbreviated
$\Psi(u)-\Psi(v)=\wedge_{i=1}^{k}
(\Psi_i(u)-\Psi_i(v))$. Fix $u$ and suppose
$v\gives u$ through a sequence that achieves the liminf in \eqref{E:
  wedge}. By passing to a further subsequence $\{v_j\}_j$ with $v_j \to u$, we may assume that
there exists $w\in S^{n-1}$ so that
\[w = \lim_{j \to \infty}\frac{u-v_j}{|u-v_j|}.\]
Since $\psi$ is smooth, we therefore have  
\[\liminf_{v\gives u}\frac{\sqrt{\det(\Cov_{_{\!\psi}}(u, v))}}{\abs{u-v}^{k}}= \norm{\Psi(u)\wedge \nabla_w
  \Psi(u)}=\sqrt{\det \Cov(\psi(u), \nabla_w\psi(u))},\]
  where we have once again used the Gram Identity and have set
$\nabla_w \Psi(u):= \bigwedge_{j=1}^k \nabla_w\Psi_j(u).$
The assumption \eqref{E:12-derivs} shows that the right hand since is uniformly bounded below over $u,\w,$ completing the proof. \hfill $\square$

\subsection{Proof of Proposition \ref{P: bound of numerator}}
{ 

\bigskip
\noindent According to \eqref{E:KR}, $Y_{_{\!2,\psi}}(u,v)$ has the form
\begin{align}\label{E:Y2}
 \E{\lr{\det\lr{(d\psi(u))^*d\psi(u)}\det\lr{(d\psi(v))^*d\psi(v)}}^{1/2}\;\big|\; \psi(u)=\psi(v)=0}.
\end{align}
 Using that $2ab\leq a^2 + b^2,$ it is sufficient to show that there exists $C>0$ so that 
\begin{equation}\label{E:KR-crits-goal-1}
    \sup_{0<\abs{u-v}\leq 1}\E{\det\lr{(d\psi(v))^*d\psi(v)}\;\big|\; \psi(u)=\psi(v)=0 }\leq \begin{cases}
    C\, \abs{u-v}^2&\quad k=n,\\
    C\,&\quad k < n.
  \end{cases}
\end{equation}
\noindent Our proof of \eqref{E:KR-crits-goal-1} relies on the following result.
\begin{Lem}\label{L:cov-deriv-bounds}
  Suppose $\psi$ satisifies the hypotheses of Proposition \ref{P: bound of numerator}. Then, 
  \begin{equation}\label{E:3-derivs}
\sup_{\w\in S^{n-1}}\sup_{0<\abs{u-v}\leq 1}\sup_{t\in (0,1)}\Cov\lr{\nabla_\w^2 \psi(tu + (1-t)v)~|~\psi(u)=\psi(v)=0}~<~\infty,
\end{equation}
where the bound applies to each entry of the covariance matrix and $\nabla_\w$ is the directional derivative in the direction $\w$. Similarly, 
\begin{equation}\label{E:2-derivs}
\sup_{0<\abs{u-v}\leq 1}\Cov\lr{d\psi(v)~|~\psi(u)=\psi(v)=0}~<~\infty.
\end{equation}
\end{Lem}}
\bigskip
\noindent We prove Lemma \ref{L:cov-deriv-bounds} in Section \ref{S:cov-deriv}  by a direct computation. Before doing so, we show why it implies \eqref{E:KR-crits-goal-1}. \\
The case $k<n,$ follows immediately by combining \eqref{E:2-derivs} and Lemma \ref{L:Gaussian-integrals} below. To treat the slightly more technical case $k=n,$ note that
\begin{equation}\label{E: transpose} 
\det\lr{(d\psi(v))^*d\psi(v)}=\norm{ \partial_{v_1} \psi (v)\wedge\dots\wedge  \partial_{v_n} \psi (v) }.
\end{equation}
 Write $w=\frac{u-v}{\abs{u-v}}.$ Note that since $\nabla_\w \psi_j(v)\cdot \w$ is the projection of $\nabla \psi_j(v)$ onto the direction of $\w,$ the $n$ vectors
\[\vec{B}_j:=\nabla \psi_j(v)-
\nabla_\w \psi_j(v)\cdot \w ,\qquad j=1,\ldots,n\]
are contained in the orthogonal complement to $w$ in $\R^n.$ Hence, there exist $a_j\in \R$, not all zero, so that
\[\sum_{j=1}^n a_j \vec{B}_j=0\qquad \text{and}\qquad \max_{j=1,\ldots, n}\set{\abs{a_j}}=1.\]
Without loss of generality we will assume $a_1\neq 0$. Since wedge products are unchanged after adding to a column a linear combination of all columns, we therefore have
\begin{align}\label{E:dethess}
  \det\lr{(d\psi(v))^*d\psi(v)}&= \Big\|{\sum_{j=1}^n a_j
                                       \nabla_w \psi_j(v)\cdot
                                       w\wedge  d \psi_2 (v)\wedge\cdots\wedge  d\psi_n (v) }\Big\|^2.
\end{align}
 {
Since $\norm{\eta \wedge \zeta}\leq \norm{\eta}\norm{\zeta},$ we conclude using again that $ab\leq a^2+b^2$ and Cauchy-Schwarz, that the conditional expectation in \eqref{E:KR-crits-goal-1}  is bounded above by the product
\begin{align}\label{E:dethess-2}
  &2\sum_{j=1}^n \mathbb  E\bigg[  \abs{\nabla_w \psi_j(v)}^4~\big|~\psi(u)=\psi(v)=0\bigg]^{1/2}\\
\label{E:dethess-3}&\qquad\qquad \qquad\x\quad\E{  \Big\|{ d \psi_2 (v)\wedge\cdots\wedge d \psi_n (v) }\Big\|^4~\big|~\psi(u)=\psi(v)=0}^{1/2}.
\end{align}
We will see that the first term is $O(|u-v|^2)$ and the second term is $O(1).$ To do that, we need the following fact. 
\begin{Lem}\label{L:Gaussian-integrals}
Fix $N\geq 1$ and suppose $A$ is a symmetric positive semi-definite (PSD) $N\x N$ matrix. Denote by $\mu_{_{\!A}}$ the associated centered Gaussian measure on $\R^N.$ Then, for any measurable function $f:\R^N\gives \R$ we have
\begin{equation}\label{E:delta-var}
    \int_{\R^N} f(x)d\mu_{_{\!A}}(x)~=~ \int_{\R^N} f(A^{1/2}x) d\mu_{_{\mathrm{Id}}}(x).
\end{equation}
In particular, denote by $\norm{A}_\infty$ the magnitude of the largest entry in $A.$ For each $d$, there exists a polynomial $p(t)$ of degree $d$ whose coefficients depend only on $d$ and $N$ so that 
\begin{equation}\label{E:gaussian-int-est}\abs{f(x)}\leq C \lr{1+\norm{x}}^d\;\; \forall x \in \R^N\quad \Longrightarrow
\quad
    \int_{\R^N} f(x)d\mu_{_{\!A}}(x)~\leq~ C\cdot p(\norm{A}_\infty).
  \end{equation}
\end{Lem}
\begin{proof}
 Write $A^{1/2}$ for the PSD square root of $A$. If $x$ is distributed according to the standard Gaussian measure $\mu_{\mathrm{Id}}$ on $\R^N$, then $A^{1/2}x$ is distributed according to $\mu_{_{\!A}}.$ This proves \eqref{E:delta-var}. Relation \eqref{E:gaussian-int-est} then follows from the fact that $\mu_{_{\mathrm{Id}}}$ has bounded moments of all orders. 
\end{proof}
We emphasize that Lemma \ref{L:Gaussian-integrals} holds even when $A$ is not strictly positive definite. Note that relation \eqref{E:2-derivs}, combined with \eqref{E:gaussian-int-est} shows immediately that \eqref{E:dethess-3} is uniformly bounded over the set $0<\abs{u-v}\leq 1$. To complete the proof of {Proposition \ref{P: bound of numerator}}, we check that \eqref{E:3-derivs} implies that the expression in \eqref{E:dethess-2} is $O(\abs{u-v}^2)$. To see this, note that if $ d\psi(u)=d\psi(v)=0 $, then the Mean Value Theorem yields, for each {$j=1, \dots, n$}, the existence a point $c_j=c_j(\psi,u,v)$ in the interior of the line segment between $u$ and $v$ so that 
\[\nabla_{w}\psi_j(c_j)=0.\]
Hence, for each $j=1,\ldots, n,$ we may write
\begin{align}
 \abs{\nabla_w \psi_j(v)}^4&= \abs{\int_0^1 |v-c_j|
                                              \nabla_{\w}^2 \psi_j((1-t)c_j
                                              + tv)dt}^4 \notag\\
&\leq \abs{u-v}^4\int_0^1\abs{ \nabla_{w}^2\psi_j((1-t)u+t
  v)}^4dt. \label{E:u-v bound}
\end{align}
Thus, using Jensen's inequality, we find that  for all $u,v$ with $0<|u-v|\leq1$, $\E{ \abs{\nabla_\w^2 \psi_j(v)}^4~|~\psi(u)=\psi(v)=0}$ is bounded above by
\begin{equation}\label{E:u-v bound2}
\abs{u-v}^4\sup_{t\in (0,1)}\E{\abs{\nabla_\w^2 \psi_j((1-t)u +t v)}^4~|~\psi(u)=\psi(v)=0}.
\end{equation}
Combining \eqref{E:3-derivs} with \eqref{E:u-v bound}, \eqref{E:u-v bound2}, and \eqref{E:gaussian-int-est} therefore confirms \eqref{E:dethess-2} is bounded by a uniform constant times $\abs{u-v}^2,$ as claimed. \hfill $\square$

\subsection{Proof of Lemma \ref{L:cov-deriv-bounds}}\label{S:cov-deriv}



The proofs of \eqref{E:2-derivs} and \eqref{E:3-derivs} are identical. We therefore give the details for \eqref{E:2-derivs} and omit the proof of \eqref{E:3-derivs}. Fix $u\neq v$, and recall that $F(u,v)=d_ud_v\E{\psi(u)\psi(v)}=\Cov\lr{d\psi(u),\,d\psi(v)}$. 
Note that a linear change of coordinates $(u,v)\mapsto (Au, Av)$ transforms $F(u,v)$ as follows:
\[F(Au, Av)=A^TF(u,v)A.\]
By Assumption (A1), the distribution of $d\psi(u)$ is non-degenerate. Hence, taking $A$ to be the square root of the inverse of the covariance matrix of this vector, we may work in coordinates so that 
\begin{equation}\label{E:coord-choice}
    F(u,u)=\mathrm{Id}.
\end{equation}
This is not really essential but will simplify the notation below. Assumption (A1) also guarantees that $\lr{\psi(u),\psi(v)}$ is non-degenerate and hence its covariance matrix is invertible. Thus, 
\begin{align*}
 & \Cov \lr{( d\psi_1(v), \dots,  d\psi_k(v))\;\big|\; \psi(u)=\psi(v)=0 }\\
 & \qquad\qquad\qquad \qquad\qquad\qquad = \Cov((d\psi_1(v), \dots,  d\psi_k(v)))-R(u,v), 
\end{align*}
where $R(u,v)$ is the  $nk \times nk$ matrix
\[R(u,v)\!=\!
\lr{\begin{array}{ccc}
    \!\dell_{u_1} F(u,u) & \cdots & \dell_{u_n} F(u,u) \\
    \!\dell_{u_1} F(u,v) & \cdots & \dell_{u_n} F(u,v)
\end{array}}^{\!T}
\!\!\!\!\lr{\begin{array}{cc}
    X & Y \\
    W & Z 
\end{array}}
\!\!\lr{\begin{array}{ccc}
    \!\dell_{u_1} F(u,u) & \cdots & \dell_{u_n} F(u,u) \\
    \!\dell_{u_1} F(u,v) & \cdots & \dell_{u_n} F(u,v)
\end{array}}
\]
with
\begin{equation}\label{E:XYZW}
\lr{\begin{array}{cc}
    X & Y \\
    W & Z 
\end{array}}=
\lr{\begin{array}{cc}
    F(u,u) & F(u,v) \\
    F(v,u) & F(v,v) 
\end{array}}^{-1}.
\end{equation}
Assumption (A3) shows that it remains only to bound the entries of the matrix $R(u,v)$, which can be thought of as a collection of $k \times k$ blocks $(R(u,v))_{ij}$, with $i,j=1, \dots, n$, given by
\[
\dell_{u_i} F(u,u)^T \Big(X \dell_{u_j} F(u,u)+Y \dell_{u_j} F(u,v)\Big)+\dell_{u_i} F(u,v)^T \Big(W \dell_{u_j} F(u,u)+Z \dell_{u_j} F(u,v)\Big).
\]
Next, let $M_\ell={M_\ell(v)}$ and $\ep_\ell=\ep_\ell(u,v)$ be $k \times k$ matrices so that after Taylor expansion at $u=v$ we have
\[
\dell_{u_\ell} F(u,v)= M_\ell + \ep_\ell, \qquad \qquad M_\ell=\dell_{u_\ell} F(u,v)\big|_{u=v},
\]
 for all $\ell=1, \dots, n$. By Assumption (A3) the entries of $M_\ell(u)$ are uniformly bounded by a constant and  the entries of $\ep_\ell$ are uniformly bounded by a constant times $\abs{u-v}$:
 \begin{equation}\label{E:M-ep}
     M_\ell=O(1),\qquad \ep_\ell(u,v)=O(|u-v|).
 \end{equation}
 Then, for $i=1,\ldots, k,j=1, \dots, n$,
\[
(R(u,v))_{ij}=M_i^T(X+Y+W+Z) M_j+ M_i^T (Y+Z)\ep_j + \ep_i^T(W+Z)M_j+\ep_i^T Z \ep_j.
\]
Lemma \ref{L:cov-deriv-bounds} therefore reduces to showing that for every $i,j=1, \dots, n$, we have
\begin{equation}\label{E:Rij-goal}
(R(u,v))_{ij}=O(1).
\end{equation}
To do this, note that using \eqref{E:coord-choice} and the usual block matrix inversion formula, we have
\begin{align*}
    X&= \mathrm{Id} + F(u,v) Z F(v,u), &  W&=-ZF(v,u),\\ 
    Y&=-F(u,v)Z,&    Z&=\Big(F(v,v)-F(v,u)F(u,v)\Big)^{-1}.
\end{align*}
Using the expressions for $X, W$ it is straight forward to deduce that
\[
    Y+Z= (\mathrm{Id}-F(u,v))Z,\qquad 
    W+Z=Z(\mathrm{Id}-F(v,u))
\]
as well as
\[  X+Y+Z+W~=~\mathrm{Id}+ (\mathrm{Id}-F(u,v)) Z (\mathrm{Id}-F(v,u)).\]
By Taylor expansion and Assumption (A3),
\[\mathrm{Id}-F(u,v)=O(|u-v|), \qquad \mathrm{Id}-F(v,u)=O(|u-v|),\]
uniformly for $u,v\in B_R.$ Assumption (A2) shows that for every $\xi\in \R^n$, we have
\[\xi^TZ^{-1}\xi \geq c \norm{\xi}^2\abs{u-v}^2.\]
Hence, the operator norm of $Z$ is bounded above by $c^{-1}\abs{u-v}^{-2}$ and relation \eqref{E:Rij-goal} follows by combining \eqref{E:M-ep} with the explicit expression for $ (R(u,v))_{i,j}$ just above \eqref{E:Rij-goal}. This completes the proof of \eqref{E:2-derivs}.

}

\qed


\section{Local results: Proof of Theorems \ref{T:LocalZeros} and \ref{T:LocalCrits}}\label{S: Limit theorem}
We now prove Theorems \ref{T:LocalZeros} and \ref{T:LocalCrits}. Let $\phi_\leb \in \MRW_\leb(M,g)$ and $x \in \mathcal{IS}(M,g)$. Fix also a non-negative function $r_\leb$ which satisfies $r_\leb=o(\leb)$ as $\leb\gives \infty$ and converges to a non-zero limit: $r_\leb\gives r_\infty \in (0,\infty].$

{We prove Theorem \ref{T:LocalZeros} for $r_\infty<\infty$ in Section \ref{S:small-ball-local-zeros},  Theorem \ref{T:LocalCrits} for $r_\infty<\infty$  in Section \ref{S:yet another section}, and Theorems \ref{T:LocalZeros} and \ref{T:LocalCrits} for $r_\infty<\infty$ in Section \ref{S:both thms}.  Finally, in Section \ref{S:constants} we explicitly compute the corresponding constants.}

\subsection{Proof of Theorem \ref{T:LocalZeros} when $r_\infty<\infty$}\label{S:small-ball-local-zeros}
We begin with the proof of Theorem \ref{T:LocalZeros} in the case $r_\infty<\infty.$ We reproduce here the elegant argument communicated to us by G.~Peccati. We seek to show that for any bounded measurable function $\psi:B(0,r_\infty)\gives \R$ we have
\begin{equation}
\inprod{Z_{\leb, r_\leb}^x}{\psi}\quad\stackrel{d}{\longrightarrow}\quad \inprod{Z^x_{\infty, r_\infty}}{\psi}.\label{E:zero-conv-loc}
\end{equation}
To do this, we fix $R>r_\infty$ so that for all $\leb \gg 1$ we have $r_\leb<R.$ Consider the sequence of laws on $C^0(B(0,R), \R^{n+1})$, which we denote $\mu_{\lambda,R}^x,$ associated to the random fields
$J_1(\phi_\lambda^x)=\lr{\phi_\lambda^x, \nabla \phi_\lambda^x}$
restricted to the compact set $B(0,R).$ By Kolmogorov's tightness criterion, this sequence is tight in $\lambda$ since the covariance functions of the fields are uniformly smooth and we have convergence of finite dimensional distributions (c.f. e.g. \cite[Theorem 1.4.7]{Kunita}). Moreover, by the $C^\infty$ convergence of covariance functions \eqref{E:CovConv1}, Prokhorov's theorem guarantees that $\mu_{\lambda,R}^x$ converges weakly to $\mu_{\infty,R}$. Hence, Skorohod's representation theorem gives a coupling of $J_1(\phi_\lambda^x)$ and $J_1(\phi_\infty^x)$ in which $J_1(\phi_\lambda^x)$ converges pointwise almost surely to $J_1(\phi_\infty^x).$ This implies \eqref{E:zero-conv-loc}. Indeed, with probability $1$, by Bulinskaya's Lemma (cf eg Proposition 6.12 in \cite{AW}) the zero set of $\phi_\infty^x$ inside of $B(0,R)$ is non-degenerate (i.e. a smooth co-dimension $1$ submanifold). Thus, using that non-degenerate zero sets are stable under $C^1$ perturbations and dominated convergence yields the result. 

This argument does not seem to hold for the case when $r_\infty=\infty$ since the domain $B(0,R)$ is non-compact and one cannot apply Kolmogorov's tightness criterion. Moreover, such an argument cannot possibly work for critical points of $\phi_\lambda^x$ since they are not stable under $C^k$ perturbations for any $k$ (see Theorem \ref{T:LocalCrits}). 

\subsection{Proof of Theorem \ref{T:LocalCrits} when $r_\infty<\infty$}\label{S:yet another section}
We seek to apply the Kac-Rice formula for $m=1,2$ to conclude 
 \begin{equation}\label{E: moments convergence}
     \E{\C_{\leb,r_\leb}(\psi)^m} ~~\longrightarrow~~ \E{\C_{\infty, r_\infty}(\psi)^m},
 \end{equation}
 as $\leb \to \infty$. To do this, we begin by checking that the fields $d \phi_\lambda^x(u)$ satisfy the hypotheses of the Kac-Rice formula \eqref{E:KR} for all $\leb$ sufficiently large. Since the fields are smooth, we must only check the non-degeneracy condition (2).  This condition is trivial when $m=1.$ The following lemma verifies this for $m=2.$
 { \begin{Lem}\label{L:KR-crits}
 Let $r_\lambda\in (0,\infty)$ be a non-negative sequence with $\liminf r_\lambda > 0 $ and $r_\leb = o(\leb)$ both as $\leb \gives \infty$. Then, there exists $\Lambda$ so that for all $u\neq v$, and all $\leb \geq \Lambda$, the distribution of $\lr{d\phi_\leb^x(u),d\phi_\leb^x(v)}$ is non-degenerate. In particular, there exists $C,\Lambda >0$ so that for all $u\neq v$ with $u,v\in B_{r_\leb}$
 \begin{equation}\label{E:cov-bound}
 \inf_{\leb\geq \Lambda}[\det \Cov\lr{d\phi_\leb^x(u), d\phi_\leb^x(v)}]^{1/2}\geq C\abs{u-v}^n.
 \end{equation}
 \end{Lem}
 \begin{remark}
 We state Lemma \ref{L:KR-crits} so that it applies both to the case $r_\infty<\infty$ and $r_\infty=\infty.$ In this section we use only the case $r_\infty<\infty$, but in the next section we will apply it to $r_\infty=\infty.$
 \end{remark}
 \begin{proof}[Proof of Lemma \ref{L:KR-crits}]
We must check that $\phi_\leb^x$ satisfies the hypothesis \eqref{E:12-derivs} of Proposition \ref{P:L1Density}. That is, we seek to show that there exist $c>0$ and $\Lambda >0$ so that 
 \[ \inf_{\lambda \geq \Lambda}\inf_{u\in B_{r_\leb}}\inf_{\w\in S^{n-1}}\det\Cov \lr{d\phi_\infty^x(u),\nabla_\w d\phi_\infty^x(u)}=c>0.\]
The $C^\infty$ convergence of covarinace kernels \eqref{E:CovConv1} shows that 
\[\det\Cov \lr{d\phi_\leb^x(u),\nabla_\w d\phi_\leb^x(u)}=\det\Cov \lr{d\phi_\infty^x(u),\nabla_\w d\phi_\infty^x(u)}+o_\leb(1),\]
where the rate of convergence is uniform over $\w\in S^{n-1}$ and $u\in B_{r_\leb}$. 
 By Lemma \ref{L:infty-cond}, the expression $\det\Cov \lr{d\phi_\infty^x(u),\nabla_\w d\phi_\infty^x(u)}$ is bounded from below away from $0$ uniformly in $u,$ completing the proof.  
\end{proof}
 }
 
 
 We have now verified that we can apply the Kac-Rice formula for finite $\leb$ to study the convergence in \eqref{E: moments convergence} with $m=1,2$ (that we can apply it for $\leb=\infty$ was already show in Proposition Section \ref{P:Surjective crits}). It remains only to check that the limit of the expressions for finite $\lambda$ is the expression for $\lambda=\infty.$ We will work out the details for the more complicated case $m=2.$ Note that it is enough to check the case $\psi={\bf 1}_A$, the indicator of a bounded set $A$, since linear combinations of these are dense in the space of bounded measurable functions. For such $\psi$ we have, by the Kac-Rice formula that the factorial moment
\[\frac{1}{\vol(B(0,R))^2}\mathbb E \Big[\big(\sum_{\tiny{\substack{d\phi_\leb^x(u)=0\\u\in B(0,R)}}}\psi(u)\big)\big(\sum_{\tiny{\substack{d\phi_\leb^x(u)=0\\u\in B(0,R)}}}\psi(u) - 1\big)\Big]\]
is equal to 
\begin{align}\label{E:lambda int}
\frac{1}{\vol(B(0,R))^2}\int_{(A\cap B(0,R))^2}Y_{_{\!2,d\phi_\leb^x}}\lr{u,v}\Den_{(d\phi_\leb^x(u),d\phi_\leb^x(v))}\lr{0,0}dudv,
\end{align}
where
\[Y_{_{\!2,d\phi_\leb^x}}(u, v)=\E{\left[\det \mathrm{Hess}(\phi_\leb^x)(u)\right]^{\frac{1}{2}}\left[\det \mathrm{Hess}(\phi_\leb^x)(v)\right]^{\frac{1}{2}}\,\Big|\, d\phi_\leb^x(u)=d\phi_\leb^x(v)=0}.\]

{ 
We prove that \eqref{E:lambda int} converges, as $\lambda \to \infty$, to the analogous expression for $d\phi_\infty^x$
in two steps. First, we show that one can apply dominated covergence to the expression in \eqref{E:lambda int}. Then, we show that pointwise, when $u\neq v$, the limit of the integrand in \eqref{E:lambda int} as $\lambda \gives \infty$ is the integrand from the Kac-Rice formula applied to $d\phi_\infty.$ In short, the proof of Theorem \ref{T:LocalCrits} when $r_\infty<\infty$ reduces to the following lemma. 
\begin{Lem}\label{L:dom-conv}
Let $r_\lambda\in (0,\infty)$ be any non-negative sequence with $\liminf r_\lambda > 0 $ and $r_\leb = o(\leb)$ both as $\leb \gives \infty$. There {exist} $C>0$ and $\Lambda >0$ so that for all $\leb\geq \Lambda$ we have
\begin{equation}\label{E:Y-bound}
    \sup_{0<\abs{u-v}\leq r_\leb}Y_{2,d\phi_\leb^x}(u,v)\leq C\abs{u-v}^2,
\end{equation}
and
\begin{equation}\label{E:den-bound}
    \sup_{0<\abs{u-v}\leq r_\leb}\Den_{(d\phi_\leb^x(u),d \phi_\leb^x(v))}\leq \frac{C}{\abs{u-v}^n}.
\end{equation}
In particular, for all $\leb \geq \Lambda$  $Y_{2,d\phi_\leb}(u,v)\Den_{(d\phi_\leb^x(u),d \phi_\leb^x(v))}$ is dominated by $|u-v|^{-n+2}$, which is in $L^1$ with respect to $\vol(B_{r})^{-2}{\bf 1}_{\set{u,v\in B_r}}{dudv}$ for all $r$. Moreover, for fixed $u\neq v$,
\begin{equation}\label{E:pointwise}
\lim_{\leb\gives \infty}Y_{_{\!2,d\phi_\leb}}\lr{u,v}\Den_{\lr{d\phi_\leb(u),d\phi_\leb(v)}}\lr{0,0}=Y_{_{\!2,d\phi_\infty}}\lr{u,v}\Den_{\lr{d\phi_\infty(u),d\phi_\infty(v)}}\lr{0,0}.
\end{equation}
\end{Lem}
 \begin{remark}
 We state Lemma \ref{L:dom-conv} so that it applies both to the case $r_\infty<\infty$ and $r_\infty=\infty.$ In this section we use only the case $r_\infty<\infty$, but in the next section we will apply it to $r_\infty=\infty.$
 \end{remark}
}
\begin{proof}
{ By Lemma \ref{L:KR-crits}, there exists $\Lambda_0>0$ so that for all $\lambda \in [\Lambda_0,\infty]$, and $u\neq v$ the random variable $(d\phi_\leb^x(u),d\phi_\leb^x(v))$ is non-degenerate. Hence, in the notation of Lemma \ref{L:KR-crits},
\begin{equation}\label{E:den-cov}
    \Den_{(d\phi_\leb^x(u),d\phi_\leb^x(v))}\lr{0,0}~=~(2\pi)^{-n}\lr{\det\Cov_{d\phi_\leb^x}\lr{u,v}}^{-1/2}.
\end{equation}
Equation \eqref{E:den-bound} now follows from \eqref{E:cov-bound}. We now turn to showing \eqref{E:Y-bound} for which we need to verify that the field $d\phi_\leb^x$ satisfies  hypotheses (A1)-(A3) in Proposition \ref{P: bound of numerator} for all $\leb$ sufficiently large. We already proved in Lemma \ref{L:KR-crits} that assumption (A1) is satisfied for all $\leb$ sufficiently large. Next, assumption (A3) holds by the uniform convergence of covariance kernels \eqref{E:CovConv1} and the fact that the limiting kernel $\Pi_\infty^x(u,v)$ for $\phi_\infty^x$ has uniformly bounded derivatives of all orders. To check (A2), we recall the notation $F(u,v)=d_ud_v \E{\phi_\leb^x(u), \phi_\leb^x(v)}$ from Proposition \ref{P: bound of numerator} {with $\psi=d\phi_\lambda^x$}. Note that for all $\xi\in \R^n,$
\[\xi^T\lr{F(v,v)-F(v,u)F(u,u)^{-1}F(u,v)}\xi\geq 0\]
since $F(v,v)-F(v,u)F(u,u)^{-1}F(u,v)$ is the covariance matrix for $d\phi_\leb^x(v)$ given that $d\phi_\leb^x(u)$ vanishes. Thus, since the expression in the previous line vanishes at $u=v,$ we may Taylor expand around $u=v$ to obtain 
\[\xi^T\lr{F(v,v)-F(v,u)F(u,u)^{-1}F(u,v)}\xi = \xi^T M(u)\xi |u-v|^2 + O(\abs{u-v}^3),\]
where $M$ is the second order term in the Talyor expansion of the matrix on the left hand side (and in particular is given by a finite number of derivatives of the covariance matrix $\Pi_\leb^x(u,v)$ of $\phi_\leb^x$ evaluated at $u=v$). Hence, by the computation of $L$ in Lemma \ref{L:infty-cond}, and the $C^\infty$ convergence of covariance kernels \eqref{E:CovConv1}, we have
\[\sup_{\abs{u}<r_\leb}\abs{M(u)-2\mathrm{Diag}\lr{\kappa_4,\kappa_{2,2},\ldots, \kappa_{2,2}}}~=~o_\leb(1),
 \]
 with $\kappa_4=\int_{S^{n-1}}\w_1^4d\w$, $ \kappa_{2,2}=\int_{S^{n-1}} \w_1^2\w_2^2d\w$ as in Lemma \ref{L:infty-cond}. This proves (A2) and hence establishes \eqref{E:Y-bound}. Finally, to prove \eqref{E:pointwise}, fix $u\neq v.$ For all $\leb$ sufficiently large, we have, by the $C^\infty$ convergence of covariance kernels \eqref{E:CovConv1}, that
 \[\Cov_{d\phi_\leb}(u,v)=\Cov_{d\phi_\infty}(u,v)+o_\leb(1).\]
 Moreover, $\Cov_{d\phi_\leb}(u,v)$ is non-singular by Lemma \ref{L:KR-crits} for all $\leb$ sufficiently large. Therefore, by \eqref{E:den-cov}, we have
 \begin{align*}
     \Den_{(d\phi_\leb^x(u),d\phi_\leb^x(v))}
   =\Den_{(d\phi_\infty^x(u),d\phi_\infty^x(v))}+o_\leb(1) .
 \end{align*}
Thus, it remains to show that as $\leb \gives \infty$
\begin{equation}\label{E:Y-goal}
    Y_{_{\!2,d\phi_\leb^x}}(u, v)=Y_{_{\!2,d\phi_\infty^x}}(u, v)+o_\leb(1).
\end{equation}
To do this, write $\Sigma_{u,v,\leb}= \Cov\lr{d^2\phi_\leb(u),d^2\phi_\leb(v)~|~d\phi_\leb(u)=d\phi_\leb(v)=0}$ for the covariance matrix of the second derivatives of $\phi_\leb$ at $u,v$ given the first derivatives vanish. Note that for $u\neq v$ this covariance is well-defined since the distribution of $\lr{d\phi_\leb(u),d\phi_\leb(v)}$ is non-degenerate. Therefore, using Lemma \ref{L:Gaussian-integrals}, we may write
\[Y_{_{\!2,d\phi_\leb^x}}(u, v)=\int_{\R^N} f(\Sigma_{u,v,\leb}^{1/2}\xi)d\mu_{\mathrm{Id}}(\xi),\]
where $N=2n(n+1)$ and $f$ is a function of polynomial growth. The entries of the covariance matrix $\Sigma_{u,v,\leb},$ for our fixed $u,v$, are polynomials in $\Pi_\leb^x(u,v)$ and its derivatives. Therefore,
\[\Sigma_{u,v,\leb}=\Sigma_{u,v,\infty}+o_\leb(1).\]
The function $f$ has polynomial growth at infinity, and so we may apply dominated convergence to prove \eqref{E:Y-goal}. This completes the proof of Lemma \ref{L:dom-conv}.
}

\end{proof}

\subsection{{Proofs of Theorems \ref{T:LocalZeros} and \ref{T:LocalCrits} when  $r_\infty=\infty$}}\label{S:both thms}
We begin by proving \eqref{E:expectation}, for which it suffices to establish that $\lim_{\leb\gives \infty}\Var[Z_{\leb, r_\leb}^x(1)]=0$, which is precisely \eqref{E: equal to infty}, together with the equality
\begin{equation}\label{E:expectation-for-zeros}
\lim_{\leb\gives \infty}\E{Z_{\leb,\,r_\leb}^x(1)}=\frac{1}{\sqrt{\pi n}}\frac{\Gamma\lr{\frac{n+1}{2}}}{\Gamma\lr{\frac{n}{2}}}.
\end{equation}
We prove \eqref{E:expectation-for-zeros} in Section \ref{S:constants} below and focus here on proving \eqref{E: equal to infty}. 
{ To do this, note that we may apply the Kac-Rice formula \eqref{E:KR} with $m=2$ to $\phi_\leb^x$ for all $\leb$ sufficiently large. Indeed, the fields $\phi_\leb^x$ are smooth for all $\leb.$ Moreover, by the $C^\infty$ convergence of covariance kernels \eqref{E:CovConv1}, we have
\[\Cov\lr{\phi_\leb^x(u),d\phi_\leb^x(u)}=\Cov\lr{\phi_\infty^x(u),d\phi_\infty^x(u)}+o_{\leb}(1).\]
Hence, since  $\Cov\lr{\phi_\infty^x(u),d\phi_\infty^x(u)}$ is non-degenerate by Proposition \ref{P:Surjective zeros}, so is the left hand side for all $\leb$ sufficiently large. Finally, again by \eqref{E:CovConv1}, there exists $\Lambda >0$ so that
\[\inf_{\leb\geq \Lambda} \inf_{u\in B_{r_\leb}}\inf_{\w\in S^{n-1}} \det \Cov\lr{\phi_\leb^x(u),\nabla_\w \phi_\leb^x(v)}>0.\]
Hence, by Proposition \ref{P:L1Density} there exists $C>0$ so that
\begin{equation}\label{E:zeros-den-est}
    \inf_{\leb\geq \Lambda} \inf_{\substack{\abs{u-v}\leq 1\\ u\in B_{r_\leb}}} \Den_{(\phi_\leb^x(u),\phi_\leb^x(v))}\leq C \abs{u-v}^{-1}.
\end{equation}
Thus, in particular, $\Den_{(\phi_\leb^x(u),\phi_\leb^x(v)})$ exists for all $u\neq v$ and all $\leb$ sufficiently large. We then have that, in the notation of the Kac-Rice formula, $\Var\big[Z_{\leb, r_\leb}^x(1)\big]$ is
\begin{align}
    \label{E:zeros-var-form}\frac{1}{\vol(B_{r_\leb})^{2}}\int_{\R^n}\int_{\R^n} {\bf 1}_{\set{B_{r_\leb}\x B_{r_\leb}}}(u,v)\Big[ &Y_{2,\phi_\leb^x}(u,v)\Den_{\phi_\leb^x(u),\phi_\leb^x(v)}(0,0) ~-\\
    \notag & \;\;Y_{1,\phi_\leb^x}{(u)}  Y_{1,\phi_\leb^x}{(v)} \Den_{\phi_\leb^x(u)}(0)\Den_{\phi_\leb^x(v)}(0)\Big]dudv.
\end{align}
To show {that \eqref{E:zeros-var-form}} converges to zero, }note that by \eqref{E:LimitDef} and \eqref{E:CovConv1}, there {exists} $\Lambda >0$ so that, uniformly over $\leb \geq \Lambda$ and  $u,v\in B_{r_\leb}$, for $\psi \in \set{\phi_\leb^x,\phi_\infty,\,d\phi_\leb^x,\,d\phi_\infty}$ we have
\begin{equation}\label{E:DeCorr}
\Cov\lr{
\psi(u) ,\psi(v)}=
\left(\begin{array}{ccc}
  \Cov\lr{\psi(u)}& 0\\
 0& \Cov\lr{\psi(v)} 
\end{array}\right)+o(1),
\end{equation}
as $\abs{u-v}\gives \infty.$ Hence, combining \eqref{E:DeCorr} with Lemma \ref{L:Gaussian-integrals}, we find
\[\sup_{u,v\in B_{r_\leb},\, \abs{u-v}\geq r_\leb^{1/2}}\abs{Y_{_{\!2,\phi_\leb^{x}}}(u,v)-Y_{_{\!1,\phi_\leb^{x}}}(u)Y_{_{\!1,\phi_\leb^{x}}}(v)} =o(1).\]
Similarly, 
\[\sup_{u,v\in B_{r_\leb},\, \abs{u-v}\geq r_\leb^{1/2}}\abs{\Den_{_{(\phi_\leb^{x}(u),\phi_\leb^x(v))}}(0,0)-\Den_{_{\phi_\leb^{x}(u)}}(0)\Den_{_{\phi_\leb^{x}(v)}}(0)} =o(1)\]
as $\leb \gives \infty.$ On the other hand, 
\[\lim_{\leb \gives \infty}\frac{\vol\set{(u,v)\in B_{r_\leb}\x B_{r_\leb}:\; {\abs{u-v}\geq r_\leb^{1/2}}}}{\vol(B_{r_\leb}\x B_{r_\leb})} =1.\]
Hence, the integrand in the expression above for $\Var[Z_{\leb,r_\leb}^x(1)]$ goes to zero pointwise on a sequence of sets whose measures tends to one. 
{
To complete the proof of \eqref{E: equal to infty} it therefore remains to show that the integrand in \eqref{E:zeros-var-form} is also uniformly dominated by an $L^1$ function. To do this, we repeat the proof of Lemma \ref{L:dom-conv} to see that $\phi_\leb$ satisfies the hypotheses of Proposition \ref{P: bound of numerator} for all $\leb$ exceeding some $\Lambda >0$. Hence, 
\[\sup_{\leb \geq \Lambda}\sup_{\substack{u\neq v\\ u\in B_{r_\leb}}}Y_{2,\phi_\leb}(u,v)<\infty. \]
Similarly, simply by appealing to \eqref{E:LimitDef} and \eqref{E:CovConv1}, we have 
\[\sup_{\leb \geq \Lambda}\sup_{u\in B_{r_\leb}}Y_{1,\phi_\leb}(u)\Den_{\phi_\leb^x(u)}(0)<\infty\]
as well. {These bounds, together with \eqref{E:zeros-den-est}, yield}  that there is a constant $C>0$ so that the integrand in \eqref{E:zeros-var-form} is bounded by 
\[\frac{1}{\vol(B_{r_\leb})^{2}}{\bf 1}_{\set{B_{r_\leb}\x B_{r_\leb}}}(u,v)\x \begin{cases}
C\abs{u-v}^{-1} &\quad \abs{u-v}\leq 1,\\
C &\quad \abs{u-v}>1.
\end{cases}\]
This completes the proof of \eqref{E:expectation} and \eqref{E: equal to infty}.}

 The proof of \eqref{E:CritDist1} and \eqref{E:CritDist2} is to repeat the preceeding argument with $\phi_\leb$ replaced by $d\phi_\leb.$ The limit in \eqref{E:CritDist2} is independent of $x$ since $\Crit_{\infty,r}^x(1)$ is independent of linear changes of coordinates on $T_xM$ and, up to such a change of coordinates, $g_x$ coincides with the Euclidean metric. This completes the proof of Theorems \ref{T:LocalZeros} and \ref{T:LocalCrits}. 

\subsection{Computation of Explicit Constants}\label{S:constants} Before proving Theorem \ref{T:Global} 
we note that the statements in Remarks \ref{R:zeros-jets1} and \ref{R:crits-jets1} follow from the extended
Kac-Rice formula (Remark \ref{R:KR Extension}). We also note that the
first moment asymptotics \eqref{E:expectation} follow from explicit computation of the limit in the Kac-Rice
formula:
\[\frac{1}{\sqrt{2\pi}}\E{\|d\phi_\infty(0)\| \,|
  \phi_\infty(0)=0}=\lr{\frac{n}{2\pi}}^{n/2}  \int_{\R^n} |\xi|
e^{-n\abs{\xi}^2/2} d\xi =\frac{1}{\sqrt{\pi n}} \frac{\Gamma
  \left(\frac{n+1}{2} \right)}{\Gamma \left(\frac{n}{2} \right)},\]
which in particular confirms \eqref{E:expectation-for-zeros}. To obtain \eqref{E:crit loc}, note that
\[\Cov(\dell_1\phi_\infty(0),\dell_2\phi_\infty(0),
\dell^2_{11}\phi_\infty(0), \dell^2_{22}\phi_\infty(0),
\dell^2_{12}\phi_\infty(0))\] 
is
\[
\begin{pmatrix}
1/2&0&0&0&0\\
0&1/2&0  &0   &0\\
0&0  &3/8&1/8&0\\
0&0&1/8&3/8&0\\
0&0&0&0&1/8
\end{pmatrix}.
\]
Hence,
\begin{align*}
&\E{|\det \text{Hess}\, \phi_\infty (0)|\,|\,  d\phi_\infty (0)=0}\\
&\qquad= \frac{8}{(2\pi)^\frac{3}{2}}  \int_{\R^3} |x_1x_2-x_3^2| e^{-\frac{1}{2}(3x_1^2+3x_2^2-2x_1x_2+8x_3^2)} dx_1dx_2dx_3\\
&\qquad = \frac{1}{8(2\pi)^\frac{3}{2}}  \iiint_{\R^3} |2y_1^2-y_2^2-y_3^2| e^{-\frac{1}{2}(y_1^2+y_2^2+y_3^2)} dy_1dy_2dy_3\\
&\qquad = \frac{1}{8(2\pi)^\frac{3}{2}} \left[\int_0^{+\infty}r^4e^{-\frac{r^2}{2}}dr \right] \left[\int_0^\pi \int_0^{2\pi}|3 \sin^2\theta\cos^2\varphi-1| \sin \theta d\varphi d\theta \right] = \frac{1}{4 \sqrt{6}}.
\end{align*}
The changes of variables we used are $y_1=x_1+x_2$,
$y_2=\sqrt{2}(x_1-x_2)$, $y_3=\sqrt{8}x_3$ and spherical coordinates
$(r, \varphi,\theta) \in (0,\infty) \times (0,2\pi) \times (0,
\pi)$. Combining this with $\Den_{d\phi_\infty(0)}(0,0)=\frac{1}{\pi}$
and the Kac-Rice formula confirms \eqref{E:crit loc}. 

\section{Global Result - Proof of Theorem \ref{T:Global}}\label{S: other cors} 
  
Let $\phi_\leb\in \MRW_\leb(M,g)$ and suppose that $M$ is a manifold
of isotropic scaling (Definition \ref{D:IS}) and that random waves on $(M,g)$ have
short-range correlations (Definition \ref{D:SRC}). We derive Theorem \ref{T:Global} 
 from Theorems \ref{T:LocalZeros} and
\ref{T:LocalCrits}, respectively. The derivation for zeros and critical points is essentially
identical, so we will focus on proving Theorem \ref{T:Global} for critical points and
will indicate the necessary changes to prove the zero sets statements as we go. 

We first prove the estimates \eqref{E: expected size} and
\eqref{E: expected critical}. It is enough to do this for indicator functions $\psi = {\bf
1}_A$ for any $A\subseteq M.$ Fix $\ep>0.$ For each $\leb$ partition $A$ into finitely many disjoint subsets $\set{ U_{\alpha}}_{\alpha \in S_\leb}$ so that 
$A=\bigcup_{\alpha \in S_\leb}U_{\alpha }$ and for some $c,C>0$
\[c \cdot \leb^{-1+\ep}\leq \text{diam}(U_{\alpha })\leq  C\cdot \leb^{-1+\ep}\] 
as $\leb \to \infty.$ For each $\alpha \in S_\leb$ choose $x_{\alpha,\leb} \in U_\alpha$ and write
\[A_{\alpha, \leb}:=\{u\in T_{x_{\alpha,\leb}}M:\;
\exp_{x_{\alpha,\leb}}(u/\leb)\in U_\alpha\cap A\}.\]
We have, 
\begin{align*}
\frac{\E{\Crit_\leb({\bf 1}_A)}}{\leb^n\vol(A)}
&=\sum_{\alpha_\leb \in S_\leb} \frac{\vol(U_{\alpha})}{\vol(A)}\E{
  \frac{\Crit_\leb({\bf 1}_{U_\alpha})}{\leb^n \vol(U_\alpha)}}\\
&= \sum_{\alpha \in S_\leb}\frac{\vol(U_{\alpha
  })}{\vol(A)} \,\E{\Crit_{\leb, \leb^\ep}^{x_{\alpha, \leb}}({\bf
  1}_{A_{\alpha, \leb}})},
\end{align*}
where $\Crit_{\leb, r_\leb}^x$ is defined in
\eqref{E:LocalCritMeas}. Combining \eqref{E:CritDist2} (see Theorem
\ref{T:LocalCrits}) with Remarks \ref{R:gensetscrits} and \ref{R:localunifcrits} and the previous line proves \eqref{E:
  expected critical}. 

We now seek to prove \eqref{E:var} and \eqref{E:var crits}. As before, it is
enough to take as our test function the indicator ${\bf 1}_A$ for $A\subseteq M$
measurable. The proofs of \eqref{E:var crits} and
\eqref{E:var} are identical, and we provide the details below for
\eqref{E:var crits}.

For each $x \in M$ write $T_{x,\leb}:=\Var(d \phi_\leb(x))=d_xd_y|_{x=y}\Pi_\leb(x,y)$. Proposition
\ref{P:Surjective crits} and the discussion in Section  \ref{S:KR crits} ensure that $T_{\leb,x}$ is an invertible matrix at every $x$ for all $\leb$ sufficiently large. We may therefore set
\[\psi_{ \leb}(x):= T_{x,\leb}^{-\frac{1}{2}} \, d\phi_\leb(x),\]
which yields
\[ \Var(\psi_{ \leb}(x))= \text{Id},\quad \qquad \forall \; x \in M.\]
Note that $\psi_{ \leb}$ and $d\phi_\leb$ have the same zero set. It will turn out to be more convenient to study the variance for the size of the zero set of  $ \psi_{ \leb}$. Let us write 
\[X_{\leb}:=\leb^{-n}\#\{A\cap\psi_\leb^{-1}(0)\}.\] 
{ {We first} check that we can apply the Kac-Rice formula to studying the first two moments of $X_\leb.$ Since $X_\leb$ are smooth, we need only check that for $\leb$ sufficiently large {the Gaussian vector} $\lr{\psi_\leb(x),\psi_\leb(y)}$ is non-degenerate for all $x\neq y.$ Note that for all $\leb$ sufficiently large, $\psi_\leb(x)$ is uniformly non-degenerate for all $x\in M$ by the isotropic scaling assumption \eqref{D:IS}. Thus, when $x,y\in M$ satisfy $d_g(x,y)>\leb^{-\ep},\,\ep\in (0,1),$ the short range correlation assumptions \eqref{D:SRC} and \eqref{E:DeCorr} immediately show that $\lr{\psi_\leb(x),\psi_\leb(y)}$ is non-degenerate for all $\lambda$ large enough. In contrast, if $d_g(x,y)<\leb^{-\ep}$, {we} observe that $\Cov\lr{\psi_\leb(x),\psi_\leb(y)}$ is
\begin{equation}\label{E:psi-cov}
   P \lr{\begin{array}{cc}
    d_xd_y|_{y=x}\Pi_\leb(x,y) & d_xd_y\Pi_\leb(x,y) \\
    d_xd_y\Pi_\leb(y,x) & d_xd_y|_{x=y}\Pi_\leb(x,y)
\end{array}}P,\qquad  P:=\lr{\begin{array}{cc}
    T_{x,\leb}^{-1/2} &0  \\
     0&T_{y,\leb}^{-1/2} 
\end{array}}.
\end{equation}
The middle matrix is precisely $\Cov\lr{d\phi_\leb(x),d\phi_\leb(y)}.$ We already saw in Lemma \ref{L:KR-crits} that this matrix is invertible for every $x\neq y$ with $d_g(x,y)<\leb^{-\ep}.$ 
{Also note that}
$T_{x,\leb}=\Cov\lr{d\phi_\leb(x)}=\leb^2\Cov\lr{d\phi_\leb^x(0)}=\leb^2\lr{\Cov\lr{d\phi_\infty(0)}  +o(1)}$
uniformly for all $x\in M$. {In particular,} 
\begin{equation}\label{E:T-est}
    T_{x,\leb}~=~ \leb^{2}\Big(-\tfrac{1}{n}\mathrm{Id}+o_\leb(1)\Big)\qquad \text{and}\qquad  T_{x,\leb}^{-1/2}~=~ \leb^{-1}\Big(-n^{1/2}\mathrm{Id}+o_\leb(1)\Big).
\end{equation} 
{It then follows that} $\Cov\lr{\psi_\leb(x),\psi_\leb(y)}$ is also invertible for all $x\neq y$. This confirms that {we} can apply the Kac-Rice formula} to write
\begin{equation}
\Var[X_{\leb}]=\iint_{A \x A} \lr{I_{_{\!2,\psi_\leb}}(x,y)-I_{_{\!1,\psi_\leb}}(x)I_{_{\!1,\psi_\leb}}(y)}dv_g(x)dv_g(y), \label{E:VarInt}
\end{equation}
where 
\[I_{_{\!2,\psi_\leb}}(x,y)=Y_{_{\!2, \psi_\leb/\leb}}(x,y)\cdot
\text{Den}_{\psi_\leb(x)/\leb,\psi_\leb(y)/\leb}(0,0)\]
and
\[I_{_{\!1,\psi_\leb}}(x)=Y_{_{\!1,\psi_\leb/\leb}}(x) \cdot \text{Den}_{\psi_\leb(x)/\leb}(0),\] 
with 
\begin{align*}
&Y_{_{\!1,\psi_\leb/\leb}}(x)=\E{\big\|\,D\lr{\leb^{-1}\psi_\leb(x)} \big\| \; \big | \; \psi_\leb(x)=0},\\
&Y_{_{\!2,\psi_\leb/\leb}}(x,y)=\E{\big\|\,D\lr{\leb^{-1}\psi_\leb(x)} \big\| \big\| \,D\lr{\leb^{-1}\psi_\leb(y)} \big\|\;\big |\; \psi_\leb(x)=\psi_\leb(y)=0},
\end{align*}
where we have abbreviated
$\norm{Df(x)}=\left[\det\lr{df(x)^*df(x)}\right]^{1/2}.$ We will decompose the integral in \eqref{E:VarInt} into three $\leb$-dependent pieces using the following construction. There exist three positive numbers $C_1, C_2, C_3$ depending only on $n=\dim(M)$ with the following properties. For each $\ep>0$ and every $\leb>0$ there exists a collection of measurable sets $\set{B_\alpha}_{\alpha\in S_{\leb,\ep}}$ satisfying
\begin{enumerate}
\item[i)\,\,\,] $\#S_{\leb,\ep} \leq C_1\leb^{n\ep}$.
\vskip .1cm
\item[ii)\,\,] $\diam(B_\alpha)\leq \leb^{-\ep}$ for every $\alpha \in S_{\ep, \leb}$.
\vskip .1cm
\item[iii)] $\set{x,y\in M:\, d_g(x,y)<\leb^{-2\ep}} \subset \; \Union_{\alpha \in S_{\leb,\ep}} B_{\alpha}\x B_{\alpha}$.
\vskip .1cm
\item[iv)\,] For any $K>C_2$ and distinct $\alpha_1,\ldots, \alpha_K\in
  S_{\leb,\ep}$ we have $\bigcap_{i=1}^K B_{\alpha_i}=\emptyset.$
\vskip .1cm
\item[v)\,\,\,]For each $B_\alpha\in S_{\leb, \ep}$ and every $2\leq k \leq C_2$ 
\vskip -.2cm\[\#\left\{\mathrm{distinct}~~\alpha_2,\ldots, \alpha_k\in
  S_{\leb,\ep}~|~ B_\alpha \cap \bigcap_{i=2}^k B_{\alpha_i}\neq \emptyset \right\} \leq C_3.\]
\end{enumerate}
To see this, cover $M$ with finitely many coordinate charts. On each chart $g$ is uniformly comparable to the Euclidean metric. For the Euclidean metric, the existence of a collection satisfying (i)-(v) follows from standard covering arguments. Taking the union of these collections over the finite number of coordinate charts completes the construction of the sets  $\set{B_\alpha}_{\alpha\in S_{\leb,\ep}}$ satisfying (i)-(v). Setting $\ep:=\frac{n-1}{2n}$, write
\begin{equation}\label{E:decomposition}
  \Var[X_{\leb}] = \sum_{j=1}^3\underbrace{\iint_{\Omega_{j,\leb}\cap
      A\x A} \lr{I_{_{\!2,\psi_\leb}}(x,y)-I_{_{\!1,\psi_\leb}}(x)I_{_{\!1,\psi_\leb}}(y)}dv_g(x)dv_g(y)}_{=:W_{j,\leb}},
\end{equation}
where 
\begin{align*}
\Omega_{1,\leb}&=\bigcup_{\alpha \in S_{\leb, \ep}} B_{\alpha}\x
                 B_{\alpha},\quad \Omega_{2,\leb}=\Omega_{1,\leb}^c
                 \cap V_{\leb},\quad \Omega_{3,\leb}=\Omega_{1,\leb}^c\cap V_{\leb}^c,
\end{align*}
and 
\begin{equation}\label{E:V}
V_{ \leb}=\left \{(x,y) \in M \times M :\; \max_{\alpha, \beta \in\set{0,1}}\set{ \leb^{-\alpha-\beta}\,|\nabla_x^\alpha \nabla_y^\beta\, \Cov(\psi_\leb(x),\psi_\leb(y))|}>\leb^{-\frac{n-1}{4}}\right \}.
\end{equation}
The proof of \eqref{E:var crits} now reduces to the following three estimates:
\begin{equation}\label{E:Goal}
  W_{i,\leb}=O(\leb^{-\frac{n-1}{2}}),\qquad \text{as}\quad \leb \gives \infty,\quad i=1,2,3.
\end{equation}
We begin by proving \eqref{E:Goal} for $i=1$. Consider any $B\subseteq M$ with $\diam(B)\leq \inj(M,g)$ and fix $x\in B$. Write 
\[B_{x,\leb}:=\{u\in T_xM:\;\exp_x(u/\leb)\in B\},\]
and note that for each $B\subset M$ 
\[\frac{\#\lr{\{\psi_\leb^x = 0\}
    \cap
    B_{x,\leb}}}{\vol(B_{x,\leb})}=\frac{\#\lr{\psi_\leb^{-1}(0)
    \cap B}}{\leb^n \vol(B)},\] 
where $\psi_\leb^x(u)=\psi_\leb(\exp_x(u/\leb)).$ By the Kac-Rice formula,
\begin{align}
\frac{1}{ \vol(B)^2}\notag&\iint_{B\x B}\lr{I_{2, \psi_\leb}(x,y)-I_{_{\!1,\psi_\leb}}(x)I_{_{\!1,\psi_\leb}}(y)}dv_g(x)dv_g(y)\\
\label{E:singleset}&\qquad= \Var \lr{ \frac{\#\lr{\psi_\leb^{-1}(0)
    \cap B}}{\leb^n \vol(B)}}=  \Var\lr{\frac{\#\lr{\{\psi_\leb^x = 0\}
    \cap
    B_{x,\leb}}}{\vol(B_{x,\leb})}}.
\end{align}
Since $(M,g)$ is a manifold of isotropic scaling, this last
expression is uniformly bounded over $x\in M$ (see Remark \ref{R:localunifcrits}). Using the inclusion-exclusion formula and property (iv)
of $S_{\leb, \ep}$, we have the following decomposition for the
indicator function of $\Omega_{1,\leb}:$
\[{\bf 1}_{\Omega_{1,\leb}}=\sum_{j=1}^{C_2} (-1)^{j+1} \sum_{\substack{\mathrm{distinct}\,\,\alpha_i \in S_{\leb, \ep}\\ i=1,\ldots, j}} {\bf 1}_{B_{\alpha_{1,\ldots,j}}\x B_{\alpha_{1,\ldots, j}}},\]
where $B_{\alpha_{1,\ldots,j}}:=B_{\alpha_1}\cap \cdots \cap
B_{\alpha_j}.$ By properties (i) and (v), for each $j,$ the number of
terms in the inner sum is at most $C_1\,C_3\, \leb^{n\ep}.$ Note that
by (ii), we have $\vol(B)\leq \leb^{-n\ep}$ for each $B\in
\set{B_\alpha}_{\alpha\in S_{\leb, \ep}}.$ For each $\alpha \in
S_{\leb,\ep},$ choose $x_{\alpha,\leb}\in B_\alpha.$ Relation
\eqref{E:CritDist2} in Theorem
\ref{T:LocalCrits} together with Remark \ref{R:localunifcrits} and
\eqref{E:singleset} shows
\begin{equation}\label{E:Varest}
  \sup~~\left\{\Var
    \Big[\frac{\#\lr{\psi_\leb^{-1}(0)
    \cap B}}{\leb^n \vol(B)} \Big]~:~ B\neq \emptyset \text{ finite intersection of sets in } \set{B_\alpha}_{\alpha \in S_{\leb, \ep}}\right\} =O(1)
\end{equation}
as $\leb\gives \infty.$ Combining this with \eqref{E:singleset}, we find
\begin{align*}
  W_{1,\leb} &= \sum_{j=1}^{C_2} (-1)^{j+1}
  \sum_{\substack{\mathrm{distinct}\,\,\alpha_i \in S_{\leb, \ep}\\
  i=1,\ldots, j}} \vol(B_{\alpha_{1,\ldots,j}})^2\Var\left[\frac{\#\lr{\psi_\leb^{-1}(0)
    \cap B_{\alpha_{1,\ldots,j}}}}{\leb^n \vol(B_{\alpha_{1,\ldots,j}})}\right]=O\lr{\leb^{-n\ep}},
\end{align*}
which confirms \eqref{E:Goal} for $i=1$ since $\ep=(n-1)/(2n).$ Next, to prove \eqref{E:Goal} for $i=2,3$ we will need the following estimate:
\begin{equation}\label{E: L2bound}
\left\|  \leb^{-|\alpha|-|\beta|}\,\nabla_x^\alpha \nabla_y^\beta\, \big(\Cov(\psi_\leb(x),\, \psi_\leb(y) \big)_{i,j} \right\|^2_{L^2(M \times M)} =O_{\alpha, \beta}(\leb^{-n +1})
\end{equation}
as $\leb\gives \infty$ for all $\alpha, \beta \geq 0,\,\, 1\leq i, j
\leq n.$ We postpone the proof of \eqref{E: L2bound} until the end of
this section. Assuming it for the moment, abbreviate
\[\Cov(\psi_\leb(x),\, \psi_\leb(y))=\Sigma_\leb(x,y)= \left(\begin{array}{cc}\Sigma_{x,x} & \Sigma_{x,y} \\ \Sigma_{x,y}^T&  \Sigma_{y,y}\end{array}\right).\]
By construction 
\begin{equation}\label{E:nicecov}
\Sigma_{x,x}=\Sigma_{y,y}=\text{Id},\qquad d_x|_{x=y} \Sigma_{x,y}=0.
\end{equation}
Combining Chebyshev's inequality with the definition of $V_{\leb}$
with \eqref{E: L2bound} yields
 \begin{equation}\label{E:volume}
    \vol_g(V_{ \leb})=O(\leb^{-\frac{n-1}{2}}).
  \end{equation}
  Next, we claim that
\begin{equation}\label{E:denest}
\sup_{\substack {x,y\in V_{\leb} \\ d_g(x,y)>\leb^{-2\ep}}} \Big |I_{2, \psi_\leb}(x,y)-I_{_{\!1,\psi_\leb}}(x)I_{_{\!1,\psi_\leb}}(y) \Big| =O(1)
\end{equation}
as $\leb\gives \infty.$ Indeed, the Definition \ref{D:SRC} of
short-range correlations ensures that the density factor
$\Den_{\psi_\leb(x),\psi_\leb(y)}(0,0)$ is uniformly bounded above on
$V_{\leb}\cap \set{d_g(x,y)>\leb^{-2\ep}}.$ 
{
Combining Lemma \ref{L:Gaussian-integrals} with Lemma \ref{L:M-non-degen} below, we find that
$Y_{_{\!2,\psi_\leb}}(x,y)$ and $Y_{_{\!1,\psi_\leb}}(x)Y_{_{\!1,\psi_\leb}}(y)$ are uniformly bounded on $V_{\leb}\cap \set{d_g(x,y)>\leb^{-2\ep}}.$
Let
\begin{equation}\label{E:condcov}
M_\leb(x,y)=\Cov
\big(\leb^{-1}{d\psi_\leb}(x),\leb^{-1}d\psi_\leb({y})~|~{\psi_\leb}(x)=
\psi_\leb(y) =0 \big).
\end{equation}

\begin{Lem}\label{L:M-non-degen}
There exists $C>0$ such that 
\begin{equation}\label{E:M-est}
\sup_{x\neq y} M_\leb(x,y)\leq C,
\end{equation}
where the inequality applies to each entry of $M_\leb(x,y).$ 
\end{Lem}
\begin{proof}
The idea of the proof is that the estimate \eqref{E:M-est} in the far off-diagonal regime (when $d_g(x,y)>\leb^{-\ep}$) essentially follows immediately from the pointwise Weyl Law and the short range correlation assumption \eqref{D:SRC}. In the near off-diagonal regime $0<d_g(x,y)<\leb^{-\ep}$ the estimate \eqref{E:M-est} follows closely the proof of Lemma \ref{L:cov-deriv-bounds}, which gives essentially the same result but for the local fields $d\phi_\leb^{x_0}$. But, since $\psi_\leb(x) = T_{x,\leb}^{-1/2} d\phi_\leb(x)$ and the matrices $T_{x,\leb}\approx \leb^{-2}\mathrm{Id}$ (see \eqref{E:T-est}), the two fields have the same local behavior. We now provide the details. Note that
\begin{equation}\label{E:dpsi-cond-cov}
M_\leb(x,y)=
\leb^{-2}\Cov\lr{d\psi_\leb(x),d\psi_\leb(y)}-\leb^{-2}\!A(x,y)^T\!\Cov\lr{\psi_\leb(x),\psi_\leb(y)}^{-1}\!\!A(x,y),\end{equation}
where 
\[
A=\lr{\begin{array}{cc}
    \Cov\lr{d\psi_\leb(x), \psi_\leb(x)} & \Cov\lr{d\psi_\leb(x), \psi_\leb(y)} \\
     \Cov\lr{d\psi_\leb(y), \psi_\leb(x)}& \Cov\lr{d\psi_\leb(y), \psi_\leb(y)}
\end{array}},\]
and we will see below that for all $x\neq y$ the matrix $\Cov\lr{\psi_\leb(x),\psi_\leb(y)}$ is invertible. Since $\Cov(AX,BY)=A^T\Cov(X,Y)B,$ the covariance matrix $\Cov\lr{d\psi_\leb(x),d\psi_\leb(y)}$ is
\[\lr{\begin{array}{cc} C(x,x) & C(x,y)\\ C(y,x) &C(y,y)\end{array}},\qquad\qquad C(x,y)=d_xd_y\lr{T_{x,\leb}^{-1/2}d_xd_y\Pi_\leb(x,y)T_{y,\leb}^{-1/2}}.\]
Recall that for any $\ep\in (0,1),$ the short range correlations assumption \eqref{D:SRC} gives that for any multi-indices $\alpha, \beta$
 \begin{equation}\label{E:Pi-est}
     \sup_{d_g(x,y)>\leb^{-\ep}}|d_x^\alpha d_y^\beta \Pi_\leb(x,y)| ~=~ O(\leb^{\abs{\alpha}+\abs{\beta}});
 \end{equation}
 and, moreover, if $\abs{\alpha}+\abs{\beta}$ is odd, then
 \begin{equation}\label{E:Pi-est2}
    |d_x^\alpha d_y^\beta|_{x=y} \Pi_\leb(x,y)|~=~ o(\leb^{\abs{\alpha}+\abs{\beta}}).
 \end{equation}
Hence, $\leb^{-2}\Cov\lr{d\psi_\leb(x),d\psi_\leb(y)}$ is uniformly bounded for all $x,y\in M$, and it remains to check that the second term in \eqref{E:dpsi-cond-cov} is also uniformly bounded  for all $x\neq y.$ We do this, by considering separately the near off-diagonal and far off-diagonal regimes separately.
We start with far off-diagonal regime in which we fix $\ep>0$ and consider  points $x,y$ with $d_g(x,y)>\leb^{-\ep}.$ Combining \eqref{E:psi-cov} with \eqref{E:T-est} yields
\[\Cov\lr{\psi_\leb(x),\psi_\leb(y)}^{-1} = \mathrm{Id}+o_\leb(1).\]
Hence, there exists $C>0$ such that for all $\leb$ sufficiently large
\[\sup_{d_g(x,y)>\leb^{-\ep}}M_\leb(x,y)\leq C.\]
To handle the near off-diagonal regime, where $0<d_g(x,y)<\leb^{-\ep}$, let us write in local coordinates
\[x =x_0+\frac{u}{\leb},\qquad y=x_0+\frac{v}{\leb},\qquad \psi_\leb^{x_0}(u):=\psi_\leb\lr{x_0+\frac{u}{\leb}}.\]
Thus, with $X,Y,W,Z$ as in  Lemma \ref{L:cov-deriv-bounds}, the expression \eqref{E:psi-cov} becomes
\[\Cov\lr{\psi_\leb(x),\psi_\leb(y)}^{-1}=\lr{\begin{array}{cc}
    \leb^{-1} T_{x,\leb}^{1/2} &0  \\
     0&\leb^{-1} T_{y,\leb}^{1/2} 
\end{array}} \lr{\begin{array}{cc}
   X & Y \\
   W & Z
\end{array}}\lr{\begin{array}{cc}
  \leb^{-1}  T_{x,\leb}^{1/2} &0  \\
     0&\leb^{-1} T_{y,\leb}^{1/2} 
\end{array}},\]
where the extra factors of $\leb$ come from the fact that  $\leb^{-1}d\psi_\leb(x)=d\psi_\leb^{x_0}(u).$ Thus, we find that the second term in \eqref{E:dpsi-cond-cov} equals
\begin{equation}\label{E:2nd-term-psi}
B^T \lr{\begin{array}{cc}
   X & Y \\
   W & Z
\end{array}}B\qquad \text{with}\qquad  B=\lr{\begin{array}{cc} M\leb^{-1}T_{x,\leb}^{1/2} & \lr{M+\ep}\leb^{-1}T_{y,\leb}^{1/2}\\ \lr{M+\ep}\leb^{-1}T_{x,\leb}^{1/2} &\lr{M +\ep}\leb^{-1}T_{y,\leb}^{1/2}\end{array}},
\end{equation}
where $M=M(x)$ and $\ep=\ep(x,y)$ are  matrices with 
$M=\E{\leb^{-1}d_x \psi_\leb(x) \psi_\leb(x)}$ and $\ep= O(\abs{x-y})$.
The $2\x 2$ block matrix resulting from multiplying the three terms in \eqref{E:2nd-term-psi} is a function only of $X+Y+W+Z$, $\ep^T\lr{Y+Z}$, $\lr{W+Z}\ep$ and $\ep^T Z\ep $ times the matrices $\leb^{-1} T_{x,\leb}^{1/2}$. The estimates {in} \eqref{E:T-est} combined with {the} bounds just below \eqref{E:Rij-goal} show that each such term is uniformly bounded, completing the proof of Lemma \ref{L:M-non-degen}. \\
\end{proof}
}

\noindent Combining \eqref{E:volume} with \eqref{E:denest}, we have
\begin{equation}\label{E:A2}
\int_{\Omega_{2,\leb}}\lr{I_{2, \psi_\leb}(x,y)-I_{_{\!1,\psi_\leb}}(x)I_{_{\!1,\psi_\leb}}(y)}dv_g(x)dv_g(y)=O(\vol_g(V_{\leb}))=O(\leb^{-\frac{n-1}{2}}),
\end{equation}
confirming \eqref{E:Goal} for $i=2.$ Finally, we check \eqref{E:Goal}
for $i=3$. By definition of $V_\leb,$ 
\[\Sigma_\leb(x,y)=\text{Id}+\;\begin{psmallmatrix}
0 \;&\; O(\leb^{-\frac{n-1}{4}})\\ O(\leb^{-\frac{n-1}{4}}) \;&\; 0
\end{psmallmatrix},\]
with the error terms uniform over $x,y\in V_\leb^c.$ Hence,
\begin{align*}
\Den_{\psi_\leb(x),\psi_\leb(y)}(0,0)&=
                                       \det(2\pi \Sigma_\leb(x,y))^{-1/2}=\Den_{\psi_\leb(x)}(0)\Den_{\psi_\leb(y)}(0)+O(\leb^{-\frac{n-1}{2}}).
\end{align*}
Thus,
\[W_{3,\leb}=\int_{\W_{3,\leb}} \lr{Y_{_{\!2,\psi_\leb}}(x,y)-Y_{_{\!1,\psi_\leb}}(x)Y_{_{\!1,\psi_\leb}}(y)}\;dv_g(x)dv_g(y)+O(\leb^{-\frac{n-1}{2}}),\]
and proving \eqref{E:Goal} for $i=3$ reduces to showing 
\begin{equation} \label{E:Goal3}
\sup_{(x,y) \in \Omega_{3,\leb}} |Y_{2, \psi_\leb}(x,y)-Y_{_{\!1,\psi_\leb}}(x)Y_{_{\!1,\psi_\leb}}(y)|=O(\leb^{-\frac{n-1}{2}}).
\end{equation}
Write
\[\Cov \big(\leb^{-1}\psi_\leb(x), \leb^{-1}\psi_\leb(y),
\leb^{-1}d\psi_\leb(x), \leb^{-1}d\psi_\leb(y) \big)= \leb^{-2}
\left(\begin{array}{cc}\Sigma_\leb(x,y) & \leb B_\leb(x,y) \\\leb
        B_\leb(x,y)^T&  \leb^2 C_\leb(x,y)\end{array}\right),\]
where by the definition of $V_{\leb}$ we have
\begin{align*}
&B_\leb(x,y)= \left(\begin{array}{cc}0 & \leb^{-1} d_x \Sigma_\leb(x,y) \\ \leb^{-1}d_y \Sigma_\leb(x,y)&  0\end{array}\right)= \left(\begin{array}{cc}0 & O(\leb^{-\frac{n-1}{4}})\\ O(\leb^{-\frac{n-1}{4}})&  0\end{array}\right)\\
&C_\leb(x,y)\!\!= \!\!\left(\begin{array}{cc}
                      \leb^{-2}d_xd_y|_{x=y}\Sigma_\leb(x,y) &
                                                              \! \leb^{-2}d_xd_y\Sigma_\leb(x,y) \\ \leb^{-2}d_xd_y\Sigma_\leb(x,y)&  \!\leb^{-2}d_xd_y|_{x=y}\Sigma_\leb(x,y)\end{array}\right)
                                                               \!\!=\!\!\left(\begin{array}{cc} C_\leb(x) &\! \!O(\leb^{-\frac{n-1}{4}}) \\ \! \!O(\leb^{-\frac{n-1}{4}})&  C_\leb(y)\end{array}\right),
\end{align*}
where the error terms are uniform in $(x,y) \in \Omega_{3,\leb}$ and
there exists $C=C(\dim(M))>0$ so that
\[\sup_{x\in M}\abs{C_\leb(x) - C\cdot \text{Id}}=o(1)\] 
as $\leb\gives \infty$ by the poinwise Weyl law. Hence, with
$M_\leb(x,y)$ defined as in \eqref{E:condcov}
\begin{align*}
M_{\leb}(x,y)&=\left(\begin{array}{cc} C_\leb(x)& 0 \\ 0 & C_\leb(y)\end{array}\right)\left(\text{Id}+\;\begin{psmallmatrix}
O(\leb^{-\frac{n-1}{2}}) \;&\; O(\leb^{-\frac{n-1}{4}})\\ O(\leb^{-\frac{n-1}{4}}) \;&\; O(\leb^{-\frac{n-1}{2}})
\end{psmallmatrix} \;\right). 
\end{align*}
In particular, 
\begin{equation}
\det(M_\leb(x,y))=\det(C_\leb(x)) \det(C_\leb(y))\lr{1+O(\leb^{-\frac{n-1}{2}})}\label{E:detest}
\end{equation}
and
\begin{align*}
&(M_{\leb}(x,y))^{-1}= \left(\begin{array}{cc} C_\leb(x)^{-1} & 0 \\ 0 & C_\leb(y)^{-1} \end{array}\right) +\left(\begin{array}{cc} Q_{\leb}(x,y)&R_\leb(x,y)  \\  R_\leb(y,x) &Q_\leb(x,y) \end{array}\right),
\end{align*}
where $R_\leb(x,y)$, $Q_\leb(x,y)$ are matrices that satisfy the entrywise estimates
\[\sup_{(x,y) \in \Omega_{3,\leb}}R_\leb(x,y)=O(\leb^{-\frac{n-1}{4}})\quad \text{and} \quad \sup_{(x,y) \in \Omega_{3,\leb}} Q_\leb(x,y)=O(\leb^{-\frac{n-1}{2}}).\]
Then, since $d\psi_\leb(x)$ is uncorrelated from $\psi_\leb(x)$ at
each $x\in M,$ we find using \eqref{E:detest} that $Y_{2, \psi_\leb}(x,y)-Y_{_{\!1,\psi_\leb}}(x)Y_{_{\!1,\psi_\leb}}(y)$ is
\begin{align*} 
&\iint_{\R^{2n}} \frac{ |\xi|\, |\zeta|\; e^{-\frac{1}{2} \langle C_\leb(x)^{-1} \xi , \xi \rangle-\frac{1}{2} \langle C_\leb(y)^{-1} \zeta , \zeta \rangle}}{(2\pi)^n (\det  C_\leb(x)\, \det C_\leb(y))^{\frac{1}{2}}  } \!\!\left(e^{-\langle R_\leb(x,y) \, \zeta , \xi \rangle -\frac{1}{2} \left\langle Q_\leb(x,y) \,  \begin{psmallmatrix} \xi \\         \zeta   \end{psmallmatrix} , \begin{psmallmatrix} \xi \\         \zeta   \end{psmallmatrix}  \right\rangle  }-1\right)\!\! d \xi d\zeta
\end{align*}
plus $O(\leb^{-\frac{n-1}{2}})$ where the implied constant is uniform over $(x,y)\in \Omega_{3,\leb}$. Observe that
\[e^{-\langle R_\leb(x,y) \, \zeta , \xi \rangle -\frac{1}{2} \left\langle Q_\leb(x,y) \,  \begin{psmallmatrix} \xi \\         \zeta   \end{psmallmatrix} , \begin{psmallmatrix} \xi \\         \zeta   \end{psmallmatrix}  \right\rangle  }=1 -\langle R_\leb(x,y) \, \zeta , \xi \rangle + O(\leb^{-\frac{n-1}{2}}) \]
and that the integral of $\langle R_\leb(x,y) \, \eta , \xi \rangle$
against the Gaussian density above is $0$. This proves \eqref{E:Goal3}
for $i=3$, and completes the proof of Theorem \ref{T:Global} modulo the proof of \eqref{E: L2bound}, which we now supply. 

\subsubsection*{Proof of \eqref{E: L2bound}.} \label{S:L2boundpf}
By \cite[Thm. 2]{CH-derivatives}, as $\leb \gives \infty$ we have for each $\gamma, \delta = 1,\ldots, n,$
\[\nabla_x^\alpha\lr{T_{x,\leb}}_{\gamma, \delta}=
C_{\alpha}\leb^{\alpha +2 }(1+o_\alpha(1)).\]
Thus, the entries of $\nabla_x^\alpha T_{x,\leb}^{-1/2}$ are bounded by a
constant times $\leb^{\alpha - 1}.$ Since
\[\Cov(\psi_\leb(x), \psi_\leb(y))=T_{x,\leb}^{-1/2} \Cov\lr{d\phi_\leb(x),\, d\phi_\leb(y)}T_{y,\leb}^{-1/2},\]
\eqref{E: L2bound} is equivalent to showing that as $\leb\gives \infty$
\begin{equation}
\left\|  \leb^{-|\alpha|-|\beta|}\,\nabla_x^\alpha \nabla_y^\beta\, \Cov(\leb^{-1}d\phi_\leb(x),\, \leb^{-1}d\phi_\leb(y))_{\gamma,\delta} \right\|^2_{L^2(M \times M)} =O_{\alpha, \beta}(\leb^{-n +1}),
\end{equation}
We will show more generally that if $P=\mathrm{Op}(p), Q=\mathrm{Op}(q) $ are
pseudodifferential operators with orders $\text{ord}(P), \, \text{ord}(Q)$ acting on $M\times M$, then
\begin{equation}\label{E:fingoal}
\langle P\Pi_{\leb} \, , \, Q\Pi_{\leb} \rangle_{L^2(M \times M)}= O(\leb^{-n+1+\text{ord}(P) + \text{ord}(Q)}), 
\end{equation}
as $\leb \to \infty$, where the implied constant is uniform when $\norm{p}_{L^2}, \norm{q}_{L^2}$ are bounded. We prove this by induction in $\text{ord} P + \text{ord} Q$. The base case is immediate since
\begin{equation}\notag
\langle \Pi_{\leb} \, , \, \Pi_{\leb} \rangle_{L^2(M \times M)}=\int_M
\frac{\Pi_{\leb}(x,x)}{{ \dim H_{\leb}}} dv_g(x) = \lr{\dim H_{\leb}}^{-1}= O(\leb^{-n+1}). 
\end{equation}
Assume \eqref{E:fingoal} is true for all operators whose orders sum to at most $\ell-1$ and consider $P,Q$  with  $\text{ord} P+\text{ord} Q=\ell$.  Then, 
\begin{align*}
\langle P\Pi_{\leb} \, , \, Q\Pi_{\leb} \rangle
&= \left\langle    P \Delta_x^{{1}/{2}} \Pi_{\leb} \, , \, \Delta_x^{-{1}/{2}} Q\Pi_{\leb} \right\rangle+ \left\langle    [ \Delta_x^{{1}/{2}}, P] \Pi_{\leb} \, , \, \Delta_x^{-{1}/{2}} Q\Pi_{\leb} \right\rangle.
\end{align*}
Note that $\text{ord}([ \Delta_x^{{1}/{2}}, P] )+ \text{ord}( \Delta_x^{-{1}/{2}} Q)\leq \gamma-1$. Hence, the second term is $O(\leb^{-n+\gamma}).$ Finally, $\Delta_x^{{1}/{2}} \Pi_{\leb} =\leb \Pi_{\leb} + R_\leb \Pi_{\leb}$ where $R_\leb=\mathrm{Op}(r_\leb)$ is order $0$ pseudodifferential operator with $\norm{r_\leb}_{L^2}$ uniformly bounded in $\leb$. Therefore, $\left\langle    P \Delta_x^{{1}/{2}}  \Pi_{\leb} \, , \, \Delta_x^{-{1}/{2}} Q\Pi_{\leb} \right\rangle=O(\leb^{-n+1+\ell} ),$ concluding the proof of \eqref{E:fingoal}.

\end{document}